\newcommand{\md}[1]{[#1]_\text{m}}
\newcommand{\vc}[1]{[#1]^\mathsf{v}}
\newcommand{\dD}{\mathsf D}
\newcommand{\dexp}{\text{dexp}}
\newcommand{\ad}{\text{ad}}
\newcommand{\Ad}{\text{Ad}}
\newcommand{\vd}[1]{[#1]_\mathsf{v}}
\newtheorem{theorem}{Theorem}[section]
\newtheorem{lemma}{Lemma}[section]
\newtheorem{remark}{Remark}[section]
\newtheorem{ass}{Assumption}[section]
\newtheorem{cor}{Corollary}[section]
\begin{document}
%
\title{Differential Dynamic Programming on Lie Groups: Derivation, Convergence Analysis and Numerical Results}
%
%
%
%

\author{ George I. Boutselis and Evangelos Theodorou
\thanks{The authors are with the school of Aerospace Engineering, Georgia Institute of Technology, Atlanta, GA, USA.
     }
}

\markboth{Journal of \LaTeX\ Class Files,~Vol.~13, No.~9, September~2014}%
{Shell \MakeLowercase{\textit{et al.}}: Bare Advanced Demo of IEEEtran.cls for Journals}

\IEEEtitleabstractindextext{%
\begin{abstract}

 We develop a discrete-time optimal control framework for systems evolving on Lie groups. Our work generalizes the original Differential Dynamic Programming method, by employing a coordinate-free, Lie-theoretic approach for its derivation. A key element lies, specifically, in the use of quadratic expansion schemes for cost functions and dynamics defined on manifolds. The obtained algorithm iteratively optimizes local approximations of the control problem, until reaching a (sub)optimal solution. On the theoretical side, we also study the conditions under which convergence is attained. Details about the behavior and implementation of our method are provided through a simulated example on $TSO(3)$.

\end{abstract}

\begin{IEEEkeywords}
\noindent Geometric control, Differential Dynamic Programming, discrete optimal control, Lie groups
 \end{IEEEkeywords}}

\maketitle


\IEEEdisplaynontitleabstractindextext

%
\IEEEpeerreviewmaketitle

\section{Introduction}
Real physical systems often admit complex configuration spaces. It can be shown, for example, that the set of rigid body transformations behaves as a differentiable manifold \cite{chirbook}. Compared to vector spaces, non-flat manifolds require a more elegant treatment. The field of geometric mechanics provides a framework for studying the motion of such systems, by employing concepts from differential geometry and Lie group theory \cite{bookgeomech}. One of its branches, discrete mechanics, addresses related phenomena in the discrete time/space domain, including geometric integration techniques and variational methods \cite{haier, discrete, hpintegrators}.

Over the past decades, there has been substantial effort to extend modern control theory to systems evolving on smooth manifolds. A plethora of theoretical results spanning stability analysis, controllability, and feedback control design can be found, for example, in \cite{bookgeometriccontrol}. Geometric optimal control, in particular, studies the formulation and solution of optimal control problems for Lie-theoretic representations of dynamic systems. This involves interpreting standard frameworks, such as Dynamic Programming and Pontryagin's maximum principle, using differential-geometric ideas \cite{bookgeomoptimalcontrol}.

Recently, several papers have been published on the development of geometric control algorithms for mechanical systems. In \cite{kobpontr} a numerical method was proposed based on Pontryagin's principle. \cite{kobilarovdiscrete} performed discrete optimal control on a variational integration scheme, by using an off-the-shelf direct optimization solver. In \cite{kobilarovddplie} some standard trajectory-optimization schemes (e.g., the stage-wise Newton method) were modified to account for matrix Lie group representations. Moreover, the work in \cite{lieprojection} derived the projection operator framework on Lie groups, and applied it for continuous-time trajectory-optimization problems. Therein, the importance of covariant differentiation in controls and dynamics applications was highlighted. Finally, the works in \cite{blochso3, leethesis} utilized the necessary optimality conditions to control certain classes of mechanical systems (e.g., the planar pendulum on $SO(2)$, or the single rigid body on $SE(3)$). The major drawback of the latter approaches lies in their heavy dependence on problem specifications.

Differential Dynamic Programming (DDP) was proposed by Mayne and Jacobson for solving discrete and continuous optimal control problems \cite{ddp}. Since then, it has found applications in many complex, high-dimensional, engineering problems (see, for example, \cite{recd, conlddp, Lantoine2012, Lin1991_Part1, Lin1991_Part2}). Its scalability, fast convergence rate, and feedback control policies constitute some of its major attributes. DDP was first mentioned in the context of geometric control in \cite{kobilarovddplie}, and was later used in \cite{liegroupddp_estimation} for estimation. Unfortunately, these works briefly provided the final form of the algorithm given certain matrix Lie groups and problem definitions. Moreover, the corresponding expressions were justified in the basis of using the traditional, vector-based formulation from \cite{ddp}. To the best of the authors' knowledge, the controls literature is lacking a rigorous work on the extension of DDP to non-flat configuration manifolds.

In this paper we formulate a Lie-theoretic version of DDP, and cover topics spanning the development, convergence analysis, and numerical behavior of the algorithm. Specifically, our contributions can be listed as follows:
\begin{itemize}
\item The derivation goes along the lines of the original DDP method, with each step being modified to account for Lie group formulations of dynamics and cost functions. We thus obtain a numerical, iterative, coordinate-free algorithm that generalizes the works in \cite{ddp}, \cite{kobilarovddplie} and \cite{liegroupddp_estimation}.
\item We provide linearization schemes for generic classes of discrete mechanical systems. In contrast to \cite{kobilarovddplie, liegroupddp_estimation}, we consider second-order expansions that help us increase the convergence rate of DDP.
\item An extensive analysis is included that studies the convergence properties of our framework. As in the Euclidean case, we find that the algorithm will converge to a (sub)optimal solution, when a Hessian-like operator remains positive definite over the entire optimization sequence.
\item We discuss practical issues through numerical simulations on $TSO(3)$. Details are provided for transitioning between the Lie-theoretic formulation of the algorithm, and its corresponding matrix/vector representation. Additionally, we highlight DDP's benefits over standard trajectory-optimization schemes.
\end{itemize}

The remaining of this paper is organized as follows: Section \ref{sec:prel} introduces the notation used in this work, and gives some preliminaries on Lie group theory and differential geometry. Section \ref{sec:ddp_der} defines the optimal control problem, and derives the Differential Dynamic Programming algorithm on Lie groups. In section \ref{sec:zetaexp} we develop linearization schemes for discrete mechanical systems, which we also express in matrix/vector form for implementation purposes. Section \ref{sec:conv} provides some convergence-related results for our algorithm. In section \ref{sec:sim} we validate the applicability of our methodology by controlling a mechanical system in simulation. Finally, section \ref{sec:conc} is the conclusion.

\section{Preliminaries and Notation}\label{sec:prel}
Here, we explain the notation used in our paper, and review certain concepts from differential geometry and Lie group theory. These can be found in any standard textbook, such as \cite{bookgeometriccontrol}, \cite{Gallier_notes} and \cite{Absbook}.

We denote by $G$ the Lie group that corresponds to the configuration space of a dynamical system. We let $e$ be its identity element, and define $L_h:G\rightarrow G$ (respectively, $R_h:G\rightarrow G$) as the left (respectively, right) translation map, for all $h\in G$. The tangent and cotangent bundles of $G$ are denoted by $TG$ and $T^*G$, respectively, while $\mathfrak{g}:=T_eG$ and $\mathfrak{g}^*:=T^*_eG$ correspond to the Lie algebra and its dual. The tangent map of $L_h$ (resp., $R_h$) at $g\in G$ is written as $T_gL_h:TG\rightarrow TG$ (resp., $T_gR_h:TG\rightarrow TG$). We shall occasionally write for brevity $hg$ and $g\xi$, $\xi g$, instead of $L_hg$ and $T_eL_g\xi$, $T_eR_g\xi$, for all $g,h\in G$, $\xi\in\mathfrak{g}$. Lastly, let $\mathfrak{X}$ represent the set of smooth vector fields on G. Then, for any smooth function $\mathsf f:G\rightarrow\mathbb{R}$, we define the {\it Lie bracket} $[\cdot,\cdot]:\mathfrak{X}\times \mathfrak{X}\rightarrow \mathfrak{X}$, such that $[X,Y](\mathsf f):=X(Y(\mathsf f))-Y(X(\mathsf f))$.

To proceed, we will also make use of the following notions:

\textbf{Natural pairing and dual maps.} Given a vector space $V$ and its dual $V^*$, we define their {\it natural pairing} as the bilinear map $\langle\cdot,\cdot\rangle:V^*\times V\rightarrow\mathbb{R}$, such that $\langle\phi,x\rangle:=\phi(x)$ for each $x\in V$, $\phi\in V^*$. Moreover, for any linear map $\mathsf f:V\rightarrow W$ between vector spaces, we define its {\it dual}, $\mathsf f^*:W^*\rightarrow V^*$, by imposing the property: $\langle\psi,\mathsf f(x)\rangle=\langle\mathsf f^*\circ\psi,x\rangle$, for each $\psi\in W^*$. Note that the former pairing is defined on $(W^*,W)$, while the latter on $(V^*,V)$. When $W=V^*$ and $\mathsf f=\mathsf f^*$, we say that $\mathsf{f}$ is a {\it symmetric} map. This implies the canonical identification of $V$ with its bidual, $V^{**}$.

\textbf{Affine connections.} Let $X, Y\in\mathfrak{X}$ be two vector fields on a Lie group $G$. Given an affine connection, $\nabla:\mathfrak{X}\times \mathfrak{X}\rightarrow \mathfrak{X}$, we denote the covariant derivative of $Y$ with respect to $X$ by $\nabla_XY$. A connection $\nabla$ is termed {\it left-invariant} if it satisfies: $\nabla_{T_{(\cdot)}L_gX}T_{(\cdot)}L_gY=T_{(\cdot)}L_g\nabla_XY$, for all $g\in G$. We also define the {\it torsion} tensor of $\nabla$, $\mathcal{T}:\mathfrak{X}\times \mathfrak{X}\rightarrow \mathfrak{X}$, as $\mathcal T(X,Y):=\nabla_XY-\nabla_YX-[X,Y]$. This term captures the difference between the Lie bracket and the utilized connection. When  $\mathcal T(X,Y)=0$ for all $X,Y\in\mathfrak{X}$, we say that $\nabla$ is {\it symmetric}.

For all left-invariant connections, there exists a bilinear map $\omega:\mathfrak{g}\times\mathfrak{g}\rightarrow\mathfrak{g}$, called the {\it connection function}, such that: $\nabla_{T_eL_gx}T_eL_gy=T_eL_g\omega(x,y)$, with $x,y\in\mathfrak{g}$. Of particular interest in this paper are the {\it Cartan-Schouten} connections. These are determined by $\omega(x,y)=\kappa[x,y]$, where $\kappa=0$, $\kappa=1$, and $\kappa=\frac{1}{2}$ correspond to the (-), (+), and (0) Cartan-Schouten connection, respectively. It can be shown that the (0) connection is a symmetric one \cite{Mahony2002}. 


\textbf{Differential and Hessian operators.} Consider a twice differentiable function $\mathsf f:G\rightarrow\mathbb{R}$, and let $\mathsf x\in T_gG$, $Y\in \mathfrak{X}$ so that $Y:G\rightarrow TG$. The {\it differential} of $\mathsf f$ at $g\in G$ is denoted by $\mathsf D \mathsf f(g):T_gG\rightarrow\mathbb{R}$ and satisfies: $\mathsf x(\mathsf f(g))=\mathsf D\mathsf f(g)(\mathsf x)$. When necessary, we will use a suffix to indicate differentiation with respect to a specific argument (e.g., $\mathsf D_h\mathsf f$ is the differential of $\mathsf f$ with respect to $h$).

The {\it Hessian} operator, $\text{Hess}\mathsf f(g):T_gG\rightarrow T_g^*G$, is a (0,2)-tensor which satisfies the identity: $\dD(\dD\mathsf f(Y))(g)(\mathsf x)=\text{Hess}\mathsf f(g)(\mathsf x)(Y(g))+\dD\mathsf f(g)(\nabla_\mathsf xY)$. In the literature, this mapping is also referred to as the second covariant derivative \cite{Absbook}, or the geometric Hessian \cite{Mahony2002}. When a symmetric connection is used, the Hessian becomes symmetric at all points (i.e., $\text{Hess}\mathsf f(g)=(\text{Hess}\mathsf f(g))^*$, for all $g\in G$). We will often use a superscript to indicate the associated connection function (e.g., $\text{Hess}^{(0)}\mathsf f(g)$ corresponds to the (0) Cartan connection).

Lastly, when we have a vector-valued mapping, $\mathsf f:G\rightarrow V$, the Hessian will be determined for each $\mathsf x,\mathsf y\in T_gG$ by $\text{Hess}\mathsf f(g)(\mathsf x)(\mathsf y)=\sum_{i}(\text{Hess}\mathsf f_i(g)(\mathsf x)(\mathsf y))e_i$. Here, $\{e_i\}$ is a basis on $V$, and $\mathsf f_i(g)$ denotes the $i^{\text{th}}$ component of $\mathsf f(g)$. Similarly, one has $\dD\mathsf f(g)(\mathsf x)=\sum_i(\dD\mathsf f_i(g)(\mathsf x))e_i$.

\textbf{Adjoint representations.} The {\it adjoint representation of $G$}, $\text{Ad}_g:\mathfrak{g}\rightarrow\mathfrak{g}$, is defined as $\text{Ad}_g(\eta):=\mathsf D_h(R_{g^{-1}}L_gh)(e)\cdot\eta$, where $g,h\in G$. For linear groups, one has $\text{Ad}_g\eta=g\eta g^{-1}$. We also define the {\it adjoint representation of $\mathfrak{g}$}, $\text{ad}_\zeta:\mathfrak{g}\rightarrow\mathfrak{g}$, as $\text{ad}_\eta\zeta:=\mathsf D_g(\text{Ad}_g\zeta)(e)\cdot\eta$, for each $\eta,\zeta\in\mathfrak{g}$. This latter operator corresponds to the Lie bracket of $\mathfrak{g}$; that is, $\text{ad}_\eta\zeta=[\eta,\zeta]$.

\textbf{Exponential map.} The {\it exponential map}, $\exp:\mathfrak{g}\rightarrow G$, is a local diffeomorphism defined by: $\exp(\xi):=\gamma(1)$, with $\gamma:\mathbb{R}\rightarrow G$ satisfying $\dot\gamma(0)=\xi$. Its right-trivialized tangent, $\text{dexp}:\mathfrak{g}\times\mathfrak{g}\rightarrow\mathfrak{g}$, is determined so that: $\dD\exp(\xi)\cdot\zeta=T_eR_{\exp(\xi)}\text{dexp}_\xi\zeta$. We also define the {\it logarithm map}, $\log:G\rightarrow\mathfrak{g}$, as the inverse of $\exp(\cdot)$ (i.e., $\log(\exp(\xi))=\xi$). When necessary, we will use a subscript to denote the group with respect to which the above operators are applied. For example, $\exp_G$ denotes the exponential map associated with $G$.

\textbf{Exponential functor.} Given any two vector spaces $K$ and $ V$, let $\mathfrak L(K, V)$ denote the set of linear maps from $K$ to $V$. We define the {\it exponential functor}, $(\cdot)^{ W}$, acting on a linear map $\mathsf g:K\rightarrow V$, such that $\mathsf g^{ W}:\mathfrak L(W,K)\rightarrow\mathfrak L(W,V)$, with $\mathsf g^{W}(\theta):=\mathsf g\circ\theta$, for any vector space $W$ and linear map $\theta\in\mathfrak L(W,K)$.

Finally, we will denote the state and control input of our system at time instant $t=t_k$ by $g^k\in G$ and $u^k\in\mathbb{R}^m$, respectively. The sequence of a state trajectory from $t_0$ to $t_H$ will be given by $\{g^k\}_0^H=\{g^0,g^1,...,g^H\}$. For simplicity, we will often omit the indexes and write $\{g^k\}$. Similarly, a control sequence will be denoted by $\{u^k\}_0^{H-1}$. Furthermore, $\tau_0^H:=\{\{g^k\}_0^H,\{u^k\}_0^{H-1}\}$ will refer to both a state, and its corresponding controls curve.

\section{Deriving the Differential Dynamic Programming algorithm on Lie groups}\label{sec:ddp_der}
DDP applies (sub)optimal control deviations in an iterative fashion, until reaching a local solution of the problem. Towards this goal, we will employ expansion schemes for our objective function and perturbed dynamics on $G$. The details are given below.

\subsection{Problem definition}
Consider the following discrete-time, finite-horizon optimal control problem:
\begin{equation}
\label{optprob}
\begin{split}
&\min_{\{u^k\}_0^{H-1}}\hspace{1.7mm} J(\{g^k\}_0^H,\{u^k\}_0^{H-1})\\
\text{s.t.}\quad g&^{k+1}=f^k(g^k,u^k),\quad g^0 = \bar{g}^0
\end{split}
\end{equation}
with
\begin{equation}
\label{J}
J:=\sum_{k=0}^{H-1} \Lambda^k(g^k,u^k)+F(g^H).
\end{equation}

The time horizon is discretized using a fixed time-step $\Delta t$, yielding $H+1$ distinct instances. $f^k:G\times\mathbb{R}^m\rightarrow G$ is the state transition mapping at time $t=t_k$.  Moreover, $\Lambda^k:G\times\mathbb{R}^m\rightarrow\mathbb{R}$ denotes the running cost, while $F(g^H)\in\mathbb{R}$ is the terminal cost term. Henceforth, we assume that \eqref{optprob} admits a solution, with $J$ and $f^k$ being both twice differentiable for all $k$.

Solving the generic problem in \eqref{optprob} analytically is rarely feasible. Furthermore, obtaining the global minimum numerically can be tedious, especially for high-dimensional systems. Hence, we seek to develop a method that gives tractable solutions, possibly at the expense of global optimality. 

\subsection{Linearization of perturbed state trajectories} \label{sec:ddp_der_lin}
Let $\{\bar{u}^k\}_0^{H-1}$ be a nominal control sequence, and let $\{\bar{g}^k\}_0^H$ denote the coresponding nominal state trajectory. We consider perturbations of the nominal control inputs given by $\{u_\epsilon^k\}_0^{H-1}:=\{\bar{u}^k+\delta u^k\}_0^{H-1}$. Assuming that $\delta u^k\in\mathbb{R}^m$ is small enough for all $k$, the perturbed state trajectory, $\{g_\epsilon^k\}_0^H$, will remain close to the nominal one. Therefore, we can use exponential coordinates to write: $\{g_\epsilon^k\}=\{\bar{g}^k\exp(\zeta^k)\}$, with $\zeta^k\in\mathfrak{g}$.

The derivation of DDP requires a linearization scheme for the perturbation vectors, $\{\zeta^k\}$. In section \ref{sec:zetaexp} we will provide second-order expansions for generic classes of discrete mechanical systems. For now, we will assume that such a scheme is available; that is, we have
\begin{equation}
\label{linzeta}
\begin{split}
&\zeta^{k+1}\approx\Phi^k(\bar{\tau}_0^H)(\zeta^k)+\text B^k(\bar{\tau}_0^H)(\delta u^k)+\frac{1}{2}\big(\Theta^k(\bar{\tau}_0^H)(\zeta^k)(\zeta^k)+\\
&\Gamma^k(\bar{\tau}_0^H)(\zeta^k)(\delta u^k)+\Delta^k(\bar{\tau}_0^H)(\delta u^k)(\zeta^k)+\Xi^k(\bar{\tau}_0^H)(\delta u^k)(\delta u^k)\big),
\end{split}
\end{equation}
where $\Phi^k(\bar{\tau}_0^H):\mathfrak{g}\rightarrow\mathfrak{g}$, $\text B^k(\bar{\tau}_0^H):\mathbb{R}^m\rightarrow\mathfrak{g}$, $\Theta^k(\bar{\tau}_0^H):\mathfrak{g}\times\mathfrak{g}\rightarrow\mathfrak{g}$, $\Gamma^k(\bar{\tau}_0^H):\mathfrak{g}\times\mathbb{R}^m\rightarrow\mathfrak{g}$, $\Delta^k(\bar{\tau}_0^H):\mathbb{R}^m\times\mathfrak{g}\rightarrow\mathfrak{g}$, and $\Xi^k(\bar{\tau}_0^H):\mathbb{R}^m\times\mathbb{R}^m\rightarrow\mathfrak{g}$ are all linear in their arguments. We also require that $\Theta^k_{(i)}(\bar{\tau}_0^H)=(\Theta^k_{(i)}(\bar{\tau}_0^H))^*$, $\Xi^k_{(i)}(\bar{\tau}_0^H)=(\Xi^k_{(i)}(\bar{\tau}_0^H))^*$ and $\Delta^k_{(i)}(\bar{\tau}_0^H)=(\Gamma^k_{(i)}(\bar{\tau}_0^H))^*$, for all $k$, $i$. The subscript here corresponds to a particular component of an operator (i.e., given a basis $\{E_i\}$ in $\mathfrak{g}$, we take $\Theta^k(\bar{\tau}_0^H)(\zeta^k)(\zeta^k)=\sum_i\Theta^k_{(i)}(\bar{\tau}_0^H)(\zeta^k)(\zeta^k)E_i$, etc).

\subsection{Expansion of the $Q$ functions}
Let us first define the discrete {\it value function}, $V^k:G\rightarrow\mathbb{R}$, at time $t=t_k$ as
\begin{equation*}
V^k(g^k):=\min_{\{u^i\}_{k}^{H-1}}\big[\sum_{i=k}^{H-1} \Lambda^i(g^i,u^i)+F(g^H)\big].
\end{equation*}

From Bellman's principle of optimality in discrete time, we have that \cite{ddp, Bertsekas}
\begin{equation}
\label{vf}
V^k(g^k)=\min_{u^k}\big[\Lambda^k(g^k,u^k)+V^{k+1}(g^{k+1})\big].
\end{equation}

We proceed by expanding both sides of eq. \eqref{vf} about a nominal sequence, $\bar\tau_0^H=\{\{\bar g^k\}_0^H,\{\bar u^k\}_0^{H-1}\}$. In particular, we consider Taylor expansions with respect to exponential coordinates. The following set of assumptions is required:
\begin{ass}
(i) The value function, $V^k$, is twice differentiable for each $k=0,1,...,H$, (ii) G is endowed with the (0), (+), or (-) Cartan connection.
\end{ass}
\noindent Now, the perturbed value function can be written as
\begin{equation}
\label{expandv}
\begin{split}
V^k(\bar{g}^k\exp(\kappa\zeta^k))=&V^k(\bar{g}^k)+\kappa\left.\frac{\mathrm d}{\mathrm d s}\right|_{s=0}V^k(\bar{g}^k\exp(s\zeta^k))+\\
\frac{1}{2}\kappa^2&\left.\frac{\mathrm d^2}{\mathrm d s^2}\right|_{s=0}V^k(\bar{g}^k\exp(s\zeta^k))+O(|\kappa|^3)\\
=&V^k(\bar{g}^k)+\kappa\dD V^k(\bar{g}^k)(T_eL_{\bar{g}^k}\zeta^k)+\\
\frac{1}{2}\kappa^2\dD(\dD V^k&(T_eL_{g^k}\zeta^k))(\bar{g}^k)(T_eL_{\bar{g}^k}\zeta^k)+O(|\kappa|^3).
\end{split}
\end{equation}
The first equality is obtained by treating $V^k$ as a function of $\kappa\in\mathbb{R}$, and the second equality has been proven in \cite[eq. 2.12.2]{varadarajanbook} and \cite[page 315]{Mahony2002}. Now, from the definition of the Hessian operator (see section \ref{sec:prel}), the second-order term is equal to $\frac{1}{2}\kappa^2(\text{Hess}V^k(\bar{g}^k)(T_eL_{\bar{g}^k}\zeta^k)(T_eL_{\bar{g}^k}\zeta^k)+T_eL_{\bar{g}^k}\omega(\zeta^k,\zeta^k)(V^k(\bar{g}^k)))$. By using the results from appendix \ref{app:hess}, in conjunction with the skew-symmetry of $\omega(\cdot,\cdot)$, equation \eqref{expandv} becomes
\begin{equation*}
\begin{split}
V^k(g^k_\epsilon)=&V^k(\bar{g}^k)+\mathsf DV^k(\bar{g}^k)\big(T_eL_{\bar{g}^k}\zeta^k\big)+\\
&\frac{1}{2}\text{Hess}^{(0)}V^k(\bar{g}^k)(T_eL_{\bar{g}^k}\zeta^k)(T_eL_{\bar{g}^k}\zeta^k)+O(||\zeta^k||^3),
\end{split}
\end{equation*}
where we have absorbed $\kappa$ into $\zeta^k$. Equivalently:
\begin{equation}
\label{expandvfinal}
\begin{split}
&V^k(g^k_\epsilon)=\\
&V^k(\bar{g}^k)+\mathcal{V}_g^k(\bar{g}^k)\big(\zeta^k\big)+\frac{1}{2}\mathcal{V}_{gg}^k(\bar{g}^k)\big(\zeta^k)(\zeta^k\big)+O(||\zeta^k||^3),
\end{split}
\end{equation}
with
\begin{equation}
\label{trivdif}
\begin{split}
\mathcal{V}^k_g(g^k):=&T_eL_{g^k}^*\circ\mathsf DV^k(g^k)\\
\mathcal{V}_{gg}^k(g^k):=&T_eL_{g^k}^*\circ\text{Hess}^{(0)}V^k(g^k)\circ T_eL_{g^k}.
\end{split}
\end{equation}

The last manipulation transforms the operators $\mathsf DV^k(g^k):T_{g^k}G\rightarrow\mathbb{R}$ and $\text{Hess}^{(0)}V^k(g^k):T_{g^k}G\rightarrow T_{g^k}^*G$, into $\mathcal{V}_g^k(g^k):\mathfrak{g}\rightarrow\mathbb{R}$ and $\mathcal{V}_{gg}^k(g^k):\mathfrak{g}\rightarrow\mathfrak{g}^*$, respectively. In light of this, our algorithm will be derived by solely using operations on $(\mathfrak{g}^*,\mathfrak{g})$. This will allow us, for example, to backpropagate the (trivialized) differential and Hessian of the value function along nominal trajectories. From a computational standpoint, by defining a basis for $\mathfrak{g}$ and its dual, we will be able to implement all steps through standard matrix/vector products.

Now, the running cost in \eqref{vf} will be similarly expanded as follows:
\begin{equation}
\label{lexp}
\begin{split}
\Lambda^k&(g^k_\epsilon,u^k_\epsilon)\approx\Lambda^k(\bar{g}^k,\bar{u}^k)+\langle\ell_g^k(\bar{g}^k,\bar{u}^k),\zeta^k\rangle+\langle\ell_u^k(\bar{g}^k,\bar{u}^k),\delta u^k\rangle+\\
&\frac{1}{2}\bigg(\langle\ell^k_{gg}(\bar{g}^k,\bar{u}^k)(\zeta^k),\zeta^k\rangle+\langle\ell^k_{gu}(\bar{g}^k,\bar{u}^k)(\zeta^k),\delta u^k\rangle+\\
&\hspace{7mm}\langle\ell^k_{ug}(\bar{g}^k,\bar{u}^k)(\delta u^k),\zeta^k\rangle+\langle\ell^k_{uu}(\bar{g}^k,\bar{u}^k)(\delta u^k),\delta u^k\rangle\bigg),
\end{split}
\end{equation}
with
\begin{equation}
\label{leftell}
\begin{split}
&\scalebox{0.93}{$\ell_g^k(g^k,u^k):=T_eL_{g^k}^*\circ \mathsf D_g\Lambda^k(g^k,u^k),\quad\ell_u^k(g^k,u^k):=\mathsf D_u\Lambda^k(g^k,u^k)$},\\
&\scalebox{0.93}{$\ell_{uu}^k(g^k,u^k):=\mathsf D^2_u\Lambda^k(g^k,u^k),\quad\ell^k_{gu}(g^k,u^k):=\mathsf D_g\mathsf D_u\Lambda^k(g^k,u^k)\circ T_eL_{g^k}$},\\
&\scalebox{0.93}{$\ell^k_{ug}(g^k,u^k):=T_eL_{g^k}^*\circ\mathsf D_u\mathsf D_g\Lambda^k(g^k,u^k)$},\\
&\scalebox{0.93}{$\ell^k_{gg}(g^k,u^k):=T_eL_{g^k}^*\circ\text{Hess}_g^{(0)}\Lambda^k(g^k,u^k)\circ T_eL_{g^k}$}.
\end{split}
\end{equation}
Note that $\mathsf D_u^2$ denotes the standard Euclidean Hessian with respect to $u$. Next, we rewrite equation \eqref{expandvfinal} for $t_{k+1}$:
\begin{equation}
\label{expandvfinal1}
\begin{split}
V^{k+1}(g^{k+1}_\epsilon)\approx &V^{k+1}(\bar{g}^{k+1})+\mathcal{V}_g^{k+1}(\bar{g}^{k+1})\big(\zeta^{k+1}\big)+\\
&\frac{1}{2}\mathcal{V}_{gg}^{k+1}(\bar{g}^{k+1})\big(\zeta^{k+1})(\zeta^{k+1}\big).
\end{split}
\end{equation}
Using the linearization scheme of $\zeta^{k+1}$ from \eqref{linzeta} and the bilinearity of the Hessian operator, we rewrite eq. \eqref{expandvfinal1} as
\begin{equation}
\label{vzeta}
\begin{split}
V^{k+1}(&g^{k+1}_\epsilon)\approx V^{k+1}+\langle(\Phi^k)^*\circ\mathcal{V}_{g}^{k+1},\zeta^k\rangle+\langle(\text B^k)^*\circ\mathcal{V}_{g}^{k+1},\delta u^k\rangle\\
&+\frac{1}{2}\big(\langle(\Phi^k)^*\circ\mathcal{V}_{gg}^{k+1}\circ\Phi^k(\zeta^k)+\mathcal{V}_{g}^{k+1}\circ\Theta^k(\zeta^k),\zeta^k\rangle+\\
&\hspace{7mm}\langle\big[(\Phi^k)^*\circ\mathcal{V}_{gg}^{k+1}\circ \text B^k(\delta u^k)+\mathcal{V}_{g}^{k+1}\circ\Delta^k(\delta u^k),\zeta^k\rangle+\\
&\hspace{7mm}\langle\big[(\text B^k)^*\circ\mathcal{V}_{gg}^{k+1}\circ\Phi^k(\zeta^k)+\mathcal{V}_{g}^{k+1}\circ\Gamma^k(\zeta^k),\delta u^k\rangle+\\
&\hspace{7mm}\langle\big[(\text B^k)^*\circ\mathcal{V}_{gg}^{k+1}\circ \text B^k(\delta u^k)+\mathcal{V}_{g}^{k+1}\circ\Xi^k(\delta u^k),\delta u^k\rangle\big),
\end{split}
\end{equation}
where higher order terms have been ignored. Moreover, it is implied that the right hand side of \eqref{vzeta} is evaluated at $\bar{\tau}_0^H$. For compactness, let us define the {\it Q function} as follows
\begin{equation}
\label{q}
Q^k(g^k,u^k):=\Lambda^k(g^k,u^k)+V^{k+1}(g^{k+1}).
\end{equation}
From eqs. \eqref{lexp} and \eqref{vzeta}, the perturbed $Q$ function can be approximated by
\begin{equation}
\label{Qexp}
\begin{split}
Q^k(g^k_\epsilon&,u^k_\epsilon)\approx Q_0^k+\langle Q_g^k,\zeta^k\rangle+\langle Q_u^k,\delta u^k\rangle+\frac{1}{2}\big(\langle Q_{gg}^k(\zeta^k),\zeta^k\rangle+\\
&\langle Q_{ug}^k(\delta u^k),\zeta^k\rangle+\langle Q_{gu}^k(\zeta^k),\delta u^k\rangle+\langle Q_{uu}^k(\delta u^k),\delta u^k\rangle\big),
\end{split}
\end{equation}
so that
\begin{equation}
\label{Qfunctions}
\begin{split}
Q_0^k:=&\Lambda^k+V^{k+1},\\ Q_g^k:=&\ell_g^k+(\Phi^k)^*\circ\mathcal{V}_{g}^{k+1},\quad Q_u^k:=\ell_u^k+(\text B^k)^*\circ\mathcal{V}_{g}^{k+1},\\
Q_{gg}^k:=&\ell_{gg}^k+(\Phi^k)^*\circ\mathcal{V}_{gg}^{k+1}\circ\Phi^k+(\mathcal{V}_{g}^{k+1})^\mathfrak{g}\circ\Theta^k,\\
Q_{gu}^k:=&\ell_{gu}^k+(\text B^k)^*\circ\mathcal{V}_{gg}^{k+1}\circ\Phi^k+(\mathcal{V}_{g}^{k+1})^{\mathbb{R}^m}\circ\Gamma^k,\\
Q_{ug}^k:=&\ell_{ug}^k+(\Phi^k)^*\circ\mathcal{V}_{gg}^{k+1}\circ \text B^k+(\mathcal{V}_{g}^{k+1})^\mathfrak{g}\circ\Delta^k,\\
Q_{uu}^k:=&\ell_{uu}^k+(\text B^k)^*\circ\mathcal{V}_{gg}^{k+1}\circ \text B^k+(\mathcal{V}_{g}^{k+1})^{\mathbb{R}^m}\circ\Xi^k.
\end{split}
\end{equation}
Above we have used the exponential functor to drop all arguments from $\Theta^k$, $\Gamma^k$, $\Delta^k$ and $\Xi^k$.

\subsection{Computing the (sub)optimal control deviations}
From the definition of the $Q$ function in \eqref{q}, equation \eqref{vf} can be transformed into
\begin{equation}
\label{perturbedvf}
V^k(g_\epsilon^k)=\min_{\delta u^k}\big[Q^k(g^k_\epsilon,u^k_\epsilon)\big].
\end{equation}
Notice that we are optimizing with respect to $\delta u^k$, since the new controls are determined as $u^k=\bar{u}^k+\delta u^k$, with $\bar{u}^k$ fixed for all $k$. By utilizing the quadratic expansion in \eqref{Qexp}, we can explicitly perform the minimization on the right-hand side of \eqref{perturbedvf}. Since the natural pairing is bilinear, one obtains the (locally) optimal control deviations:
\begin{equation}
\label{dumin}
\delta u^k_\star=-(Q_{uu}^k)^{-1}\circ Q_u^k-(Q_{uu}^k)^{-1}\circ Q_{gu}^k(\zeta^k).
\end{equation}
For this expression we have used the symmetry of $\text{Hess}^{(0)}$ (and, therefore, of $\mathcal{V}_{gg}$). This implies that $Q_{gg}^k=(Q_{gg}^k)^*$, $Q_{uu}^k=(Q_{uu}^k)^*$, and since $\mathsf D_g\mathsf D_u\Lambda^k=(\mathsf D_u\mathsf D_g\Lambda^k)^*$, then $Q_{gu}^k=(Q_{ug}^k)^*$.

Mimicking the approach in \cite{liao000} for the Euclidean version of DDP, we add an external parameter, $\gamma\in(0,1]$, in the controls update. Intuitively, this will allow us to generate descent directions even when the utilized quadratic expansions (e.g., eqs. \eqref{expandvfinal}, \eqref{Qexp}) do not fully capture the nature of the problem. It will be shown in section \ref{sec:conv}, that $\gamma$ plays a key role in the convergence of our algorithm. Hence, we will use during implementation
\begin{equation}
\label{dustar}
\delta u^k_\star=-\gamma(Q_{uu}^k)^{-1}\circ Q_u^k-(Q_{uu}^k)^{-1}\circ Q_{gu}^k(\zeta^k),
\end{equation}
for each $k=0,...,H-1$.

\subsection{Backpropagation schemes for $\mathcal{V}_{g}$ and $\mathcal{V}_{gg}$}
Observe that computing the (sub)optimal control updates requires knowledge of $\mathcal{V}_{g}$ and $\mathcal{V}_{gg}$ on the nominal state trajectory. To this end, we incorporate the quadratic expansions of $V^k$ and $Q^k$ (eqs. \eqref{expandvfinal}, \eqref{Qexp}), as well as $\delta u^k_\star$ into \eqref{perturbedvf} to get\footnote{The min operator in \eqref{perturbedvf} drops after plugging $\delta u^k_\star$ from \eqref{dumin}.}
\begin{equation*}
\begin{split}
&V^k+\langle\mathcal{V}_g^k,\zeta^k\rangle+\frac{1}{2}\langle\mathcal{V}_{gg}^k(\zeta^k),\zeta^k\rangle+O(||\zeta^k||^3)=\\
&Q_0^k+\langle Q_g^k,\zeta^k\rangle+\langle Q_u^k,-(Q_{uu}^k)^{-1}\circ Q_u^k-(Q_{uu}^k)^{-1}\circ Q_{gu}^k(\zeta^k)\rangle+\\
&\langle Q_{gu}^k(\zeta^k),-(Q_{uu}^k)^{-1}\circ Q_u^k-(Q_{uu}^k)^{-1}\circ Q_{gu}^k(\zeta^k)\rangle+\\
&\frac{1}{2}\langle Q_{gg}^k(\zeta^k),\zeta^k\rangle+\frac{1}{2}\langle Q_{uu}^k\circ(Q_{uu}^k)^{-1}\circ Q_u^k,(Q_{uu}^k)^{-1}\circ Q_u^k\rangle+\\
&\langle Q_{uu}^k\circ(Q_{uu}^k)^{-1}\circ Q_u^k,(Q_{uu}^k)^{-1}\circ Q_{gu}^k(\zeta^k)\rangle+\\
&\frac{1}{2}\langle Q_{uu}^k\circ(Q_{uu}^k)^{-1}\circ Q_{gu}^k(\zeta^k),(Q_{uu}^k)^{-1}\circ Q_{gu}^k(\zeta^k)\rangle.
\end{split}
\end{equation*}
Since the above result holds for arbitrary $\zeta^k$, we can match the first and second-order terms. After a simple manipulation, we obtain the following expressions
\begin{equation}
\label{backv}
\begin{split}
\mathcal{V}_g^k&=Q^k_g-Q^k_{ug}\circ(Q_{uu}^k)^{-1}\circ Q^k_u,\\
\mathcal{V}_{gg}^k&=Q^k_{gg}-Q^k_{ug}\circ(Q_{uu}^k)^{-1}\circ Q^k_{gu}.
\end{split}
\end{equation}
Recall that all quantities above are evaluated at $\bar\tau_0^H$, with the right-hand sides depending on $\mathcal{V}_g^{k+1}$ and $\mathcal{V}_{gg}^{k+1}$. The final condition for this backpropagation scheme is given by
\begin{equation}
\label{vfinal}
\begin{split}
\mathcal{V}^H_g(\bar{g}^H)=&T_eL_{\bar{g}^H}^*\circ\mathsf DF(\bar{g}^H),\\
\mathcal{V}_{gg}^H(\bar{g}^H)=&T_eL_{\bar{g}^H}^*\circ\text{Hess}^{(0)}F(\bar{g}^H)\circ T_eL_{\bar{g}^H},
\end{split}
\end{equation}
where $F$ denotes the terminal cost term. 

We conclude this section by summarizing our framework in Algorithm \ref{alg:ddp}. Moreover, Figure \ref{fig:ddp} shows an illustration of the applied steps.
\begin{algorithm}[h]
	\SetAlgoLined
	\KwData{Dynamics and cost functions of \eqref{optprob}, nominal control sequence $\{\bar{u}^k\}_{0}^{H-1}$, fixed initial state $\bar{g}^0$;}
	\For{$(k=0;$ $k\leq H-1;$ $k++)$}{
	Get the next nominal state $\bar{g}^{k+1}\leftarrow f^k(\bar{g}^k,\bar{u}^k)$;}
	Compute the nominal cost, $\bar{J}$, from \eqref{J};\\
	\Repeat{\emph{convergence}}{
	Calculate $\mathcal{V}^H_g$ and $\mathcal{V}_{gg}^H$ from \eqref{vfinal};\\
		\For{$(k=H-1;$ $k\geq0;$ $k--)$}{
		Determine $\Phi^k$, $\text{B}^k$, $\Theta^k$, $\Gamma^k$, $\Delta^k$ and $\Xi^k$ for the linearization scheme in \eqref{linzeta};\\
		Find the trivialized derivatives of $Q^k$ from \eqref{Qfunctions};\label{stepQ}\\
		Backpropagate $\mathcal{V}^k_g$, $\mathcal{V}_{gg}^k$ through \eqref{backv};}
		Set the line-search parameter, $\gamma\leftarrow1$;\\
		\Repeat{$J-\bar{J}\leq0$}{
		Set $\zeta^0\leftarrow0$, $g^0\leftarrow\bar{g}^0$;\\
		\For{$(k=0;$ $k\leq H-1;$ $k++)$}{
		Compute a new control $u^k\leftarrow\bar{u}^k+\delta u^k_\star$, with $\delta u^k_\star$ defined in \eqref{dustar};\\
		Get the next state $g^{k+1}\leftarrow f^k(g^k,u^k)$;\\
		Find $\zeta^{k+1}\leftarrow\log\big((\bar{g}^{k+1})^{-1}g^{k+1}\big)$;}
		Given the obtained state trajectory, $\{g^k\}_0^H$, and control sequence, $\{u^k\}_0^{H-1}$, compute the new cost, $J$;\\
		Set $\gamma\leftarrow\mathsf h\gamma$, with $\mathsf h\in(0,1)$;\label{steph}}
		Update $\{\bar{u}^k\}_0^{H-1}\leftarrow \{u^k\}_0^{H-1}$, $\{\bar{g}^k\}_0^{H}\leftarrow\{g^k\}_0^{H}$, $\bar{J}\leftarrow J$;
	}
	\caption{DDP on Lie groups}
	\label{alg:ddp}
\end{algorithm}

\begin{figure}
	\centering
	\subfigure[]{
		\includegraphics[width=0.233\textwidth]{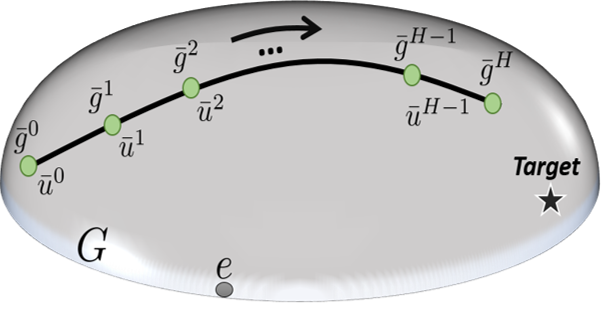}}
	\subfigure[]{
		\includegraphics[width=0.233\textwidth]{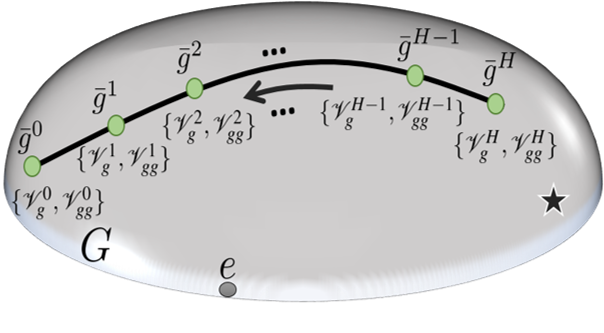}}
	\subfigure[]{
		\includegraphics[width=0.233\textwidth]{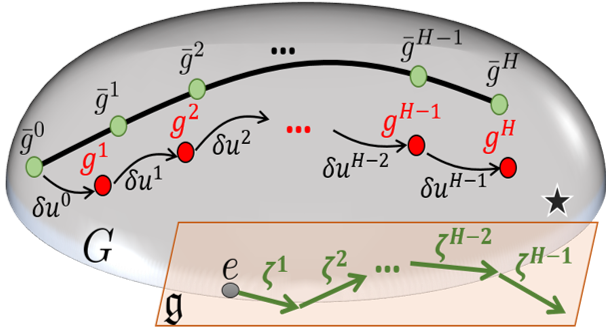}}
	\subfigure[]{
		\includegraphics[width=0.233\textwidth]{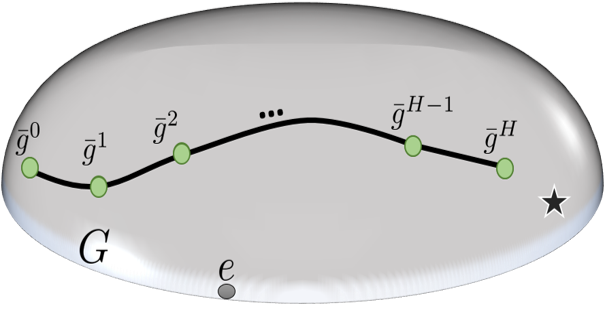}}
	\caption{Illustration of Differential Dynamic Programming on Lie groups: (a) Given a nominal control sequence, the corresponding trajectory is generated on the configuration manifold, (b) The (trivialized) derivatives of the value function are backpropagated along the nominal trajectory, (c) Control updates are determined that yield a new state and control sequence. This requires computing the linearized state perturbations on the Lie algebra, (d) The updated sequence is treated as the nominal one, and the procedure is repeated until convergence.}\label{fig:ddp}
\end{figure}

\section{Second-order expansions for discrete mechanical systems} \label{sec:zetaexp}
In this section we address linearization methods for mechanical systems. An analogous scheme has been mentioned in \cite{kobilarovddplie} and \cite{liegroupddp_estimation}; however, these works employed a first-order approximation, for which they did not provide a mathematical proof. Our contribution lies in deriving the second-order expansion in \eqref{linzeta} for generic classes of discrete dynamics. To this end, we will make use of the Baker-Campbell-Hausdorff (BCH) formula which links a Lie group to its Lie algebra. As a byproduct, we will also include an alternative approach for obtaining \eqref{linzeta} up to linear terms only.

The state of most physical systems lies on the tangent bundle of a Lie group, $G'$. Since we can always find an isomorphism between $G'\times\mathfrak g'$ and $TG'$ \cite{Gallier_notes}, we typically decompose our state as: $g:=(\chi,\xi)\in G'\times\mathfrak{g}'$ (i.e., the configuration space becomes $G=G'\times\mathfrak{g}'$). One can view $\chi$ as the {\it pose} of the system, and $\xi$ as the {\it body-fixed velocity}.

The continuous equations of motion for a fairly large class of mechanical systems is given by:
\begin{equation}
\label{conteqs1}
\begin{split}
\dot{\chi}(t)=&\chi(t)\xi(t),\\
\dot{\xi}(t)=&F_\xi(\chi(t),\xi(t),u(t)).
\end{split}
\end{equation}
Such expressions can be obtained by employing Hamilton's principle (see, e.g., \cite{hpintegrators, kobpontr}). Since we are interested in discrete control algorithms, we will next discretize eqs. \eqref{conteqs1}.

{\it Explicit transition dynamics:} The simplest scheme is given by the forward Euler method:
\begin{subequations}\label{disceq}
\begin{align}
\chi^{k+1}&=\chi^k\exp_{G'}(\Delta t\xi^k),\label{receq}\\
\xi^{k+1}&=f_\xi^k(g^k,u^k),\label{dynamicseq}
\end{align}
\end{subequations}
where \eqref{receq} is the {\it reconstruction equation} and $f_\xi^k:=\xi^k+\Delta tF_\xi(\chi^k,\xi^k,u^k)$. First, we will work with \eqref{receq} and \eqref{dynamicseq}. We will see, however, that a similar approach can be used for different discretizations as well (e.g., backward or implicit integrators).

Now, let $\{\bar{g}^k\}$, $\{\bar{u}^k\}$ be a nominal state trajectory and control sequence, respectively. Define also the state perturbation vectors as $\zeta^k:=(\eta^k,\delta\xi^k)\in\mathfrak{g}$, which correspond to control deviations $\delta u^k\in\mathbb{R}^m$. Using the same reasoning as in section \ref{sec:ddp_der_lin}, the perturbed state, $g_\epsilon^k:=(\chi^k_\epsilon,\xi^k_\epsilon)$, will be equivalent to
\begin{equation}
\label{pertzeta}
g^k_\epsilon=\bar{g}^k\exp_G(\zeta^k)=\big(\bar{\chi}^k\exp_{G'}(\eta^k),\bar{\xi}^k+\delta\xi^k\big).
\end{equation}
Note that $\mathfrak{g}'$ is a flat space, and hence the left translation map is given by vector addition, while $\exp_{\mathfrak{g}'}$ is equal to the identity operator. 

We begin by expanding $\eta^{k+1}$ with respect to $\eta^k$ and $\delta\xi^k$. Observe that the perturbed trajectory, $\{g_\epsilon^k\}$, will satisfy the kinematics equation \eqref{receq}. Thus, one has

\[
\begin{split}
\eqref{receq}\Rightarrow\chi^{k+1}_\epsilon&=\chi^k_\epsilon\exp_{G'}(\Delta t\xi^k_\epsilon)\\
\overset{\eqref{pertzeta}}{\Rightarrow}\bar{\chi}^{k+1}\exp_{G'}(\eta^{k+1})&=\bar{\chi}^k\exp_{G'}(\eta^k)\exp_{G'}(\Delta t\xi^k_\epsilon)\\
\overset{\eqref{receq}}{\Rightarrow}\bar{\chi}^k\exp_{G'}(\Delta t\bar{\xi}^k)\exp_{G'}(\eta^{k+1})&=\bar{\chi}^k\exp_{G'}(\eta^k)\exp_{G'}(\Delta t\xi^k_\epsilon),
\end{split}
\]
or, equivalently
\begin{equation}
\label{etaexpression}
\eta^{k+1}=\log_{G'}\big(\exp_{G'}(-\Delta t\bar{\xi}^k)\exp_{G'}(\eta^k)\exp_{G'}(\Delta t\bar{\xi}^k+\Delta t\delta\xi^k)\big).
\end{equation}

We proceed by applying the Baker-Campbell-Hausdorff formula repeatedly on the right-hand side of \eqref{etaexpression}. Details about the form and technical assumptions of the BCH series are given in appendix \ref{sec:bch}. For the remainder of this section we will simply write $\exp$ ($\log$, resp.) instead of $\exp_{G'}$ ($\log_{G'}$, resp.). Moreover, we will be neglecting terms of order $O(\Delta t^3)$, since one typically has $0<\Delta t\leq0.1$.

Equations \eqref{bch} and \eqref{bchlinear} imply that
\begin{equation}
\label{bchz0_0}
\exp(\eta^k)\exp(\Delta t\bar{\xi}^k+\Delta t\delta\xi^k)=\exp(\Delta t\bar{\xi}^k+Z),
\end{equation}
where\footnote{With a slight abuse of notation, $O(||(\mathsf x,\mathsf y)||^{\mathrm n})$ is equivalent to $O(||\mathsf x||^{\mathrm n_x}||\mathsf y||^{\mathrm n_y})$, with $\mathrm n_x+\mathrm n_y=\mathrm n$.}
\begin{equation*}
\begin{split}
Z:=&\Delta t\delta\xi^k+\text{dexp}_{(\Delta t\bar{\xi}^k+\Delta t\delta\xi^k)}^{-1}(\eta^k)+\frac{1}{12}\text{ad}_{\eta^k}^2(\Delta t\bar{\xi}^k)+\\
&\frac{1}{24}\text{ad}_{\eta^k}\text{ad}_{(\Delta t\bar{\xi}^k)}^2(\eta^k)+O(\Delta t^3)+O(||(\eta^k,\delta\xi^k)||^3).
\end{split}
\end{equation*}
From eq. \eqref{dexp}, we can rewrite $Z\in\mathfrak{g}'$ as
\begin{equation}
\label{bchz0_1}
\begin{split}
&Z=\Delta t\delta\xi^k+\text{dexp}_{(\Delta t\bar{\xi}^k)}^{-1}(\eta^k)-\frac{\Delta t}{2}\text{ad}_{\delta\xi^k}(\eta^k)+\\
&\frac{\Delta t}{12}\text{ad}_{\eta^k}^2\bar{\xi}^k+\frac{\Delta t^2}{12}\big[\frac{1}{2}\text{ad}_{\eta^k}\text{ad}_{\bar{\xi}^k}^2(\eta^k)+\text{ad}_{\bar{\xi}^k}\text{ad}_{\delta\xi^k}(\eta^k)+\\
&\text{ad}_{\delta\xi^k}\text{ad}_{\bar{\xi}^k}(\eta^k)\big]+O(\Delta t^3)+O(||(\eta^k,\delta\xi^k)||^3).
\end{split}
\end{equation}
Now, plugging \eqref{bchz0_0} into \eqref{etaexpression} yields
\begin{equation}
\label{eta1}
\eta^{k+1}=\log\big(\exp(-\Delta t\bar{\xi}^k)\exp(\Delta t\bar{\xi}^k+Z)\big).
\end{equation}
It remains to apply again the BCH formula on \eqref{eta1}. Due to the structure of the right-hand side, we can utilize the result from \eqref{bch1}. Specifically
\begin{equation*}
\eta^{k+1}=\text{dexp}_{(-\Delta t\bar{\xi}^k)}(Z)+O(||Z||^2).
\end{equation*}
Finally, since $\dexp_{(\cdot)}(\cdot)$ is linear in its second argument, equations \eqref{bchz0_1}, \eqref{bch} and \eqref{dexp} imply that
\begin{equation}
\begin{split}
\label{etanew}
&\eta^{k+1}=\text{Ad}_{\exp(-\Delta t\bar{\xi}^k)}(\eta^k)+\Delta t\text{dexp}_{(-\Delta t\bar{\xi}^k)}(\delta\xi^k)+\\
&\text{dexp}_{(-\Delta t\bar{\xi}^k)}\big[-\frac{\Delta t}{2}\text{ad}_{\delta\xi^k}(\eta^k)+\frac{\Delta t}{12}\text{ad}_{\eta^k}^2(\bar{\xi}^k)\big]+\\
&\frac{\Delta t^2}{12}\big[\frac{1}{2}\text{ad}_{\eta^k}\text{ad}_{\bar{\xi}^k}^2(\eta^k)+\text{ad}_{\bar{\xi}^k}\text{ad}_{\delta\xi^k}(\eta^k)+2\text{ad}_{\delta\xi^k}\text{ad}_{\bar{\xi}^k}(\eta^k)+\\
&\ad_{\eta^k}\ad_{\bar{\xi}^k}(\delta\xi^k)+\frac{1}{2}\ad_{\eta^k}^2(\bar{\xi}^k)\big]-\frac{\Delta t}{12}\ad^2_{\dexp^{-1}_{(\Delta t\bar{\xi}^k)}(\eta^k)}(\bar{\xi}^k)+\\
&O(\Delta t^3)+O(||(\eta^k,\delta\xi^k)||^3).
\end{split}
\end{equation}
For the term that is linear in $\eta^k$, we have used the property: $\text{dexp}_\rho(\lambda)=\text{Ad}_{\exp(\rho)}\big(\text{dexp}_{(-\rho)}(\lambda)\big)$ (see \cite{hpintegrators, iserles} for proof). Recall also that $\ad_{(\cdot)}(\cdot)$ and $\dexp_{(\cdot)}(\cdot)$ above are associated with the Lie algebra $\mathfrak{g}'$.

In appendix \ref{order1eta} we show an alternative method for deriving \eqref{etanew} up to first order only. The corresponding proof is based on standard multivariable calculus and group operations.
\begin{remark}
The BCH expansion does not depend on the selected affine connection.
\end{remark}

Regarding the transition dynamics \eqref{dynamicseq}, one can use the same approach as in \eqref{expandv} - \eqref{trivdif}. Let $\{E_i\}_1^n$ denote a basis for $\mathfrak{g}'$, so that $\xi^k=\sum_{i=1}^nE_i\xi^k_i$ and $f_\xi^k=\sum_{i=1}^nE_i(f_\xi^k)_i$. Then
\begin{equation}
\label{expandxi}
\begin{split}
&\delta\xi^{k+1}_i\approx T_eL_{\bar{\chi}^k}^*\circ\mathsf D_\chi(f_\xi^k)_i(\bar{g}^k,\bar{u}^k)(\eta^k)+\\
&\mathsf D_\xi(f_\xi^k)_i(\bar{g}^k,\bar{u}^k)(\delta\xi^k)+\mathsf D_u(f_\xi^k)_i(\bar{g}^k,\bar{u}^k)(\delta u^k)+\\
&\frac{1}{2}\big[T_eL_{\bar{\chi}^k}^*\circ\text{Hess}^{(0)}_\chi(f_\xi^k)_i(\bar{g}^k,\bar{u}^k)\circ T_eL_{\bar{\chi}^k}(\eta^k,\eta^k)+\\
&\mathsf D^2_\xi(f_\xi^k)_i(\bar{g}^k,\bar{u}^k)(\delta\xi^k,\delta\xi^k)+\mathsf D_u^2(f_\xi^k)_i(\bar{g}^k,\bar{u}^k)(\delta u^k,\delta u^k)+\\
&2T_eL_{\bar{\chi}^k}^*\circ\mathsf D_\xi \dD_\chi(f_\xi^k)_i(\bar{g}^k,\bar{u}^k)(\delta\xi^k)(\eta^k)+\\
&2T_eL_{\bar{\chi}^k}^*\circ\mathsf D_u \dD_\chi(f_\xi^k)_i(\bar{g}^k,\bar{u}^k)(\delta u^k)(\eta^k)+\\
&2\mathsf D_u\mathsf D_\xi(f_\xi^k)_i(\bar{g}^k,\bar{u}^k)(\delta u^k)(\delta\xi^k)\big],
\end{split}
\end{equation}

{\it Matrix/vector representation:} For ease of implementation, equations \eqref{etanew} and \eqref{expandxi} can be used to write \eqref{linzeta} in matrix/vector form. We give the details in appendix \ref{app:matrixform}.

{\it Implicit transition dynamics:}
Frequently, implicit integrators are employed to propagate dynamics forward in time. These achieve improved numerical performance compared to explicit discretization methods \cite{haier, leethesis}. In particular, the discrete Lagrange-d'Alembert-Pontryagin principle expresses the transition dynamics as follows \cite{kobilarovdiscrete}
\begin{subequations}\label{disceq1}
	\begin{align}
	\chi^{k+1}&=\chi^k\exp(\Delta t\xi^k),\label{receq1}\\
	\mathrm f_\xi(\chi^k,\xi^k&,\chi^{k+1},\xi^{k+1},u^k)=0,\label{dynamicseq1}
	\end{align}
\end{subequations}
with $\mathrm f_\xi(\chi^k,\xi^k,\chi^{k+1},\xi^{k+1},u^k)\in\mathfrak{g}'$ properly defined. The forward variational Euler method constitutes one example of this class (see \cite{hpintegrators} for more details and a derivation of $\mathrm f_\xi$). In this case, we get the updated body-fixed velocity by solving \eqref{dynamicseq1} for $\xi^{k+1}$, through a Newton-like method. Thus, the mapping $\xi^{k+1}=f^{k}_\xi(g^k,u^k)$ is here implicitly determined by \eqref{receq1}, \eqref{dynamicseq1}.

One can define the same expansion as in \eqref{expandxi}; however, the required derivatives will now be obtained through implicit differentiation of \eqref{dynamicseq1}. Specifically, assuming $(\dD_{\xi^{k+1}}\mathrm f_\xi(\bar g^k,\bar g^{k+1},\bar u^k))^{-1}$ exists, the chain rule gives
\begin{equation}
\label{firstimp}\dD_{\chi^k}\bar{\mathrm f}_\xi(\mathsf x)=0\Rightarrow\mathsf D_{\chi^k} f_\xi^k(\mathsf x)=-(\mathsf D_{\xi^{k+1}} \bar{\mathrm f}_\xi)^{-1}\circ\dD_{\chi^k}\bar{\mathrm f}_\xi(\mathsf x),
\end{equation}
with $\bar{\mathrm f}_\xi(\chi^k,\xi^k,\xi^{k+1},u^k):=\mathrm f_\xi(\chi^k,\xi^k,\chi^k\exp(\Delta t\xi^k),\xi^{k+1},u^k)$, and $\mathsf x=\chi^k\eta\in T_{\chi^k}G'$ being an arbitrary vector. Regarding the Hessians computation, we will apply the differential operator repeatedly on $\bar{\mathrm f}_\xi$. For instance, \eqref{firstimp} implies
\begin{equation*}
\begin{split}
&\dD_{\chi^k}\big(\dD_{\chi^k}\bar{\mathrm f}_\xi(\mathsf x)+\sum_i\langle\mathsf D_{\xi^{k+1}} (\bar{\mathrm f}_\xi)_i, \dD_{\chi^k} f_\xi^k(\mathsf x)\rangle E_i\big)(\mathsf x)=0\Rightarrow\\
&\text{Hess}_{\chi^k}f_\xi^k(\mathsf x)(\mathsf x)=-(\mathsf D_{\xi^{k+1}} \bar{\mathrm f}_\xi)^{-1}\circ\big(\text{Hess}_{\chi^k}\bar{\mathrm f}_\xi(\mathsf x)(\mathsf x)+\\
&\sum_i\langle(\mathsf D_{\chi^k} f_\xi^k)^*\circ(\dD_{\chi^k}\dD_{\xi^{k+1}}(\bar{\mathrm f}_\xi)_i+\dD_{\xi^{k+1}}^2(\bar{\mathrm f}_\xi)_i\circ\mathsf D_{\chi^k} f_\xi^k)(\mathsf x),\mathsf x\rangle E_i\\
&+\dD_{\xi^{k+1}}\dD_{\chi^k}\bar{\mathrm f}_\xi(\mathsf D_{\chi^k} f_\xi^k(\mathsf x))(\mathsf x)\big),
\end{split}
\end{equation*}
where we have used the Hessian operator definition, the chain rule, the linearity of $\langle\cdot,\cdot\rangle$, and the skew symmetry of $\omega(\cdot,\cdot)$. It is also assumed that all quantities above are evaluated on the nominal state/control sequence. The remaining expressions can be obtained in the same manner. Notice that the Hessians rely explicitly on the first-order terms, $\dD_{(\cdot)}f_\xi^k$.

We conclude this section by noting that one could first update the body-fixed velocities, and then determine the pose of the system (e.g., as in the backward variational Euler method - see \cite{hpintegrators}). In that case, the reconstruction equation would read $\chi^{k+1}=\chi^k\exp(\Delta t\xi^{k+1})$, and, thus, \eqref{etanew} would contain derivatives of $\xi^{k+1}$ with respect to $g^k$ and $u^k$. We skip the details for compactness.

\section{Convergence analysis}\label{sec:conv}
We study the convergence properties of the algorithm developed in section \ref{sec:ddp_der}. An analogous analysis for the Euclidean case can be found in \cite{liao000} and \cite{ddp0}. Therein, the authors showed that, under some mild conditions, the original DDP method will always converge to a solution. Their work is limited, though, to optimal control problems with terminal costs only. In this section we provide a convergence analysis that deals with the generic problem in \eqref{optprob}, and handles systems evolving on Lie groups.

In what follows, let $\{\delta u^k_\star\}_{0}^{H-1}$ be the (sub)optimal control updates given by \eqref{dustar}. Let also $\bar{U}\in\mathbb{R}^{m(H-1)}$ and $\delta U_\star\in\mathbb{R}^{m(H-1)}$ denote an entire sequence of nominal inputs and updates, respectively; that is, 
$\bar U:=\big((\bar u^0)^\top,...,(\bar u^{H-1})^\top\big)^\top$ and $\delta U_\star:=\big((\delta u^0_\star)^\top,...,(\delta u^{H-1}_\star)^\top\big)^\top$. Moreover, recall from \eqref{linzeta} that given (small enough) control perturbations $\{\bar{u}^k+\upsilon^k\}$, we can define the perturbed state trajectory as $\{\bar{g}^k\exp(\zeta^k)\}$, with
\begin{equation}
\label{linzeta1}
\zeta^{k+1}=\Phi^k(\zeta^k)+\text B^k( \upsilon^k)+O(||(\zeta^k, \upsilon^k)||^2),
\end{equation}
for each $k=0,1,...,H-1$.

We begin by stating one lemma and one set of assumptions that will be used in our analysis.
\begin{lemma}\label{lemm:psi}
	Define $\psi^k\in\mathfrak{g}^*$ as
	\begin{equation}
	\begin{split}
	\label{psi}
	\psi^k: &= \ell_g^k(\bar{g}^k,\bar{u}^k)+(\Phi^k)^*\circ \psi^{k+1},\hspace{3mm}\emph{\text{for}}\hspace{1.7mm}k=0,...,H-1,\\
	\psi^{H}:&=T_eL_{\bar{g}^H}^*\circ\dD_g F(\bar{g}^H),
	\end{split}
	\end{equation}
	with the $\ell$ functions being determined by \eqref{leftell}. Then, the total cost of \eqref{optprob} satisfies
	\begin{align}\label{eq_lemma1}
	\dD_{u^k}|_{\bar{U}} J=\ell_u^k(\bar{g}^k,\bar{u}^k)+(\emph{\text B}^k)^{*}\circ\psi^{k+1}.
	\end{align}
\end{lemma}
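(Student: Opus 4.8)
The plan is to evaluate $\dD_{u^k}|_{\bar U}J$ as a first-order variation and then recognize the recursion \eqref{psi} as a discrete adjoint (telescoping) sum. First I would perturb a single input, setting $\upsilon^j=0$ for $j\neq k$ and $\upsilon^k=\upsilon$. Since $g^0,\dots,g^k$ depend only on $\bar g^0$ and $\bar u^0,\dots,\bar u^{k-1}$, they are unaffected, so $\zeta^j=0$ for $j\le k$. The linearization \eqref{linzeta1} then collapses to $\zeta^{k+1}=\text{B}^k(\upsilon)+O(\|\upsilon\|^2)$ and $\zeta^{j+1}=\Phi^j(\zeta^j)+O(\|\upsilon\|^2)$ for $j\ge k+1$, so every downstream perturbation is a linear image of $\zeta^{k+1}$.

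Next I would compute the induced first-order change of $J$ from \eqref{J}. Only the direct dependence of $\Lambda^k$ on $u^k$ and the state-dependence of the terms with index $\ge k+1$ contribute. Using the trivialized differentials \eqref{leftell}, the terminal quantity $\psi^H$ of \eqref{psi}, and the dual-map pairing $\dD_g\Lambda^j(\bar g^j)(T_eL_{\bar g^j}\zeta^j)=\langle\ell_g^j,\zeta^j\rangle$, this reads
\[
\dD_{u^k}|_{\bar U}J(\upsilon)=\langle\ell_u^k,\upsilon\rangle+\sum_{j=k+1}^{H-1}\langle\ell_g^j,\zeta^j\rangle+\langle\psi^H,\zeta^H\rangle.
\]

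The crux is to collapse the state-dependent sum into a single pairing with $\zeta^{k+1}$. I would prove, by backward induction on $m$ from $H$ down to $k+1$, the identity $\langle\psi^m,\zeta^m\rangle=\sum_{j=m}^{H-1}\langle\ell_g^j,\zeta^j\rangle+\langle\psi^H,\zeta^H\rangle$. The base case $m=H$ is immediate (empty sum), and the inductive step uses $\psi^m=\ell_g^m+(\Phi^m)^*\circ\psi^{m+1}$ together with the defining property of the dual map, $\langle(\Phi^m)^*\circ\psi^{m+1},\zeta^m\rangle=\langle\psi^{m+1},\Phi^m(\zeta^m)\rangle$, and the propagation $\Phi^m(\zeta^m)=\zeta^{m+1}$ valid for $m\ge k+1$. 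Taking $m=k+1$ and substituting $\zeta^{k+1}=\text{B}^k(\upsilon)$ gives $\sum_{j=k+1}^{H-1}\langle\ell_g^j,\zeta^j\rangle+\langle\psi^H,\zeta^H\rangle=\langle\psi^{k+1},\text{B}^k(\upsilon)\rangle=\langle(\text{B}^k)^*\circ\psi^{k+1},\upsilon\rangle$. Combining with the display above and using that $\upsilon$ is arbitrary yields \eqref{eq_lemma1}.

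I expect the main obstacle to be careful bookkeeping rather than a deep difficulty. One must justify that the trivialized differentials $\ell_g^j$ genuinely represent the first-order variation of each $\Lambda^j$ against the Lie-algebra perturbation $\zeta^j$ (resting on \eqref{leftell} and the dual-map pairing), and that the quadratic remainders in \eqref{linzeta1} do not affect the derivative at $\bar U$. Care is also needed to verify that, with only $u^k$ perturbed, the linear propagation reduces exactly to $\zeta^{k+1}=\text{B}^k\upsilon$ and $\zeta^{j+1}=\Phi^j\zeta^j$, so that the costate telescoping aligns term-by-term with the cost sum.
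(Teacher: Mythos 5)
Your proof is correct and follows essentially the same route as the paper's: differentiate $J$ through the linearized perturbation dynamics \eqref{linzeta1} using the trivialized differentials \eqref{leftell}, then collapse the resulting sum via the recursion \eqref{psi} acting as a discrete adjoint. The only differences are presentational — you perturb a single input rather than the whole sequence and formalize the collapsing step as a backward-induction telescoping identity on pairings, whereas the paper writes the same regrouping as a nested composition of dual maps $(\text{B}^k)^*\circ[\ell_g^{k+1}+(\Phi^{k+1})^*\circ[\cdots]]$.
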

\begin{proof}
	The proof is in appendix \ref{appproof1}.
\end{proof}

\begin{ass}
\label{ass1}
(i) The controls search space $\mathcal{U}\ni U$ is compact, (ii) $Q_{uu}^k$ remains positive definite for $k=0,...,H-1$.
\end{ass}	

The convergence properties of DDP on Lie groups are established by the following theorem and its corollary.
\begin{theorem}\label{th1}
Consider the discrete-time optimal control problem in \eqref{optprob}, with $J$ being the cost function. Let $\bar{U}$ be a nominal control sequence, and $\delta U_\star$ contain the control updates from \eqref{dustar}. Then, the following is true:
\[\mathsf D_U|_{\bar{U}}J\cdot\delta U=-\gamma\sum_{i=0}^{H-1}\langle Q_u^i,(Q_{uu}^i)^{-1}( Q_u^i)\rangle+O(\gamma^2).\]
\end{theorem}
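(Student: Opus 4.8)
The plan is to evaluate the directional derivative by applying the linear functional $\mathsf D_U|_{\bar U}J$ to the update vector componentwise, $\mathsf D_U|_{\bar U}J\cdot\delta U=\sum_{k=0}^{H-1}\langle\dD_{u^k}|_{\bar U}J,\delta u^k_\star\rangle$, and substituting the costate expression from Lemma \ref{lemm:psi}. The first thing I would pin down is the order-in-$\gamma$ bookkeeping. Since the forward pass starts at $\zeta^0=0$ and the feedback law \eqref{dustar} reads $\delta u^k_\star=-\gamma(Q_{uu}^k)^{-1}\circ Q_u^k-(Q_{uu}^k)^{-1}\circ Q_{gu}^k(\zeta^k)$, an induction on $k$ through the linearized recursion \eqref{linzeta1} gives $\zeta^k=O(\gamma)$ and hence $\delta u^k_\star=O(\gamma)$ for every $k$ (this is where Assumption \ref{ass1}(ii) guarantees the inverses $(Q_{uu}^k)^{-1}$ exist). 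This bound is what allows me to replace the exact closed-loop perturbation by its linearization at the cost of an $O(\gamma^2)$ term, reducing the claim to a first-order-in-$\gamma$ identity.

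The core of the argument is two telescoping sums. First, inserting Lemma \ref{lemm:psi} and reading off $\text{B}^k(\delta u^k_\star)=\zeta^{k+1}-\Phi^k(\zeta^k)+O(\gamma^2)$ from \eqref{linzeta1}, the contribution $\sum_k\langle(\text{B}^k)^*\circ\psi^{k+1},\delta u^k_\star\rangle$ telescopes once I use the costate recursion \eqref{psi} in the form $(\Phi^k)^*\circ\psi^{k+1}=\psi^k-\ell_g^k$; with $\zeta^0=0$ and $\psi^H=\mathcal V_g^H$ (from \eqref{vfinal}) this produces the first-order cost change
\[
\mathsf D_U|_{\bar U}J\cdot\delta U=\sum_{k=0}^{H-1}\big(\langle\ell_g^k,\zeta^k\rangle+\langle\ell_u^k,\delta u^k_\star\rangle\big)+\langle\mathcal V_g^H,\zeta^H\rangle+O(\gamma^2).
\]
Second, I would re-express $\langle\mathcal V_g^H,\zeta^H\rangle$ by telescoping the increments $\langle\mathcal V_g^{k+1},\zeta^{k+1}\rangle-\langle\mathcal V_g^k,\zeta^k\rangle$. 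Expanding $\zeta^{k+1}$ with the closed-loop linearization, dualizing $\Phi^k$ and $\text{B}^k$, and using $Q_g^k=\ell_g^k+(\Phi^k)^*\circ\mathcal V_g^{k+1}$, $Q_u^k=\ell_u^k+(\text{B}^k)^*\circ\mathcal V_g^{k+1}$ together with the backpropagation \eqref{backv} rewritten as $Q_g^k=\mathcal V_g^k+Q_{ug}^k\circ(Q_{uu}^k)^{-1}\circ Q_u^k$, each increment collapses to $\langle Q_{ug}^k\circ(Q_{uu}^k)^{-1}\circ Q_u^k-\ell_g^k,\zeta^k\rangle+\langle Q_u^k-\ell_u^k,\delta u^k_\star\rangle+O(\gamma^2)$.

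Substituting the summed form of $\langle\mathcal V_g^H,\zeta^H\rangle$ back into the displayed identity, the $\ell_g^k$ and $\ell_u^k$ pieces cancel and leave $\sum_k\big(\langle Q_{ug}^k\circ(Q_{uu}^k)^{-1}\circ Q_u^k,\zeta^k\rangle+\langle Q_u^k,\delta u^k_\star\rangle\big)+O(\gamma^2)$. I then plug the feedback law \eqref{dustar} into $\langle Q_u^k,\delta u^k_\star\rangle$: the feedforward part yields the advertised $-\gamma\langle Q_u^k,(Q_{uu}^k)^{-1}\circ Q_u^k\rangle$, while the feedback part $-\langle Q_u^k,(Q_{uu}^k)^{-1}\circ Q_{gu}^k(\zeta^k)\rangle$ cancels exactly against $\langle Q_{ug}^k\circ(Q_{uu}^k)^{-1}\circ Q_u^k,\zeta^k\rangle$. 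This last cancellation is where I invoke $Q_{gu}^k=(Q_{ug}^k)^*$ and the symmetry of $(Q_{uu}^k)^{-1}$ (inherited from $Q_{uu}^k=(Q_{uu}^k)^*$) through the dual-map identity of Section \ref{sec:prel}, giving precisely the stated result.

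I expect the main obstacle to be the careful $O(\gamma^2)$ accounting across the two telescopes: one must certify that every substitution of the linearization \eqref{linzeta1} for the true closed-loop perturbation contributes only $O(\gamma^2)$, which rests on the uniform $\zeta^k=O(\gamma)$ bound and, implicitly, on the compactness of the control search space in Assumption \ref{ass1}(i) to keep the operators $\Phi^k,\text{B}^k,Q_{uu}^k,(Q_{uu}^k)^{-1}$ bounded along the entire horizon. By comparison, the algebraic cancellations are routine once the pairing and symmetry conventions are applied consistently.
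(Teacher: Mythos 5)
Your proposal is correct, and it rests on the same essential ingredients as the paper's proof: Lemma \ref{lemm:psi}, the bounds $\zeta^k=O(\gamma)$, $\delta u^k_\star=O(\gamma)$, the linearization \eqref{linzeta1}, the $Q$-function definitions \eqref{Qfunctions}, the backpropagation identities \eqref{backv}, and the duality relations $Q_{gu}^k=(Q_{ug}^k)^*$ and $((Q_{uu}^k)^{-1})^*=(Q_{uu}^k)^{-1}$. Where you genuinely differ is in the organization of the argument. The paper proves, by backward induction on $k$, the tail-sum identity \eqref{eq0_lemma1}, namely $\sum_{i=k}^{H-1}\dD_{u^i}|_{\bar U}J\cdot\delta u^i=-\gamma\sum_{i=k}^{H-1}\langle Q_u^i,(Q_{uu}^i)^{-1}(Q_u^i)\rangle+\langle\mathcal V_g^k-\psi^k,\zeta^k\rangle+O(\gamma^2)$, and then sets $k=0$ using $\zeta^0=0$; there a single boundary term $\langle\mathcal V_g^k-\psi^k,\zeta^k\rangle$ carries the costate and the value-function gradient through one interleaved recursion. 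You instead split the computation into two forward telescoping sums: the first (over $\psi^k$) is precisely the classical first-order cost expansion $\mathsf D_U|_{\bar U}J\cdot\delta U=\sum_k\big(\langle\ell_g^k,\zeta^k\rangle+\langle\ell_u^k,\delta u^k_\star\rangle\big)+\langle\mathcal V_g^H,\zeta^H\rangle+O(\gamma^2)$, and the second (over $\mathcal V_g^k$) converts the terminal term into per-stage contributions via \eqref{backv}, after which the feedback terms cancel through the same duality manipulations the paper uses. The two arguments are algebraically equivalent --- merging your two telescopes reproduces the paper's combined boundary term --- but each buys something: the paper's induction delivers the intermediate identity \eqref{eq0_lemma1} for every $k$, a statement about every tail of the horizon that mirrors the dynamic-programming structure, whereas your decomposition cleanly separates the ``calculus'' part (the adjoint expansion of $\delta J$, which is really Lemma \ref{lemm:psi} resummed) from the ``DDP algebra'' part (the $Q$/value-function cancellations), making the mechanism of the feedback-term cancellation more transparent. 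One small correction to your closing remark: compactness of $\mathcal U$ (Assumption \ref{ass1}-(i)) is not needed for this theorem --- the paper invokes it only for the convergence argument in Corollary \ref{thconv2}. For the $O(\gamma)$ bookkeeping here it suffices that the finitely many operators $\Phi^k$, $\text B^k$, $(Q_{uu}^k)^{-1}$ evaluated along the fixed nominal trajectory are bounded, which is automatic for a fixed $\bar U$ and finite horizon, given Assumption \ref{ass1}-(ii).
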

\begin{proof}
	First, we prove the more generic result:
		\begin{equation}\label{eq0_lemma1}
		\begin{split}
		&\sum_{i=k}^{H-1}  \dD_{u^i}|_{\bar{U}} J\cdot\delta u^i =\\
		&-\gamma \sum_{i=k}^{H-1} \langle Q_u^i, (Q_{uu}^i)^{-1}(Q_u^i)\rangle+ \langle\mathcal{V}_g^k - \psi^k,\zeta^k\rangle+O(\gamma^2).
		\end{split}
		\end{equation}
Let us consider the case when $k\equiv H-1$:
\begin{equation*}
\begin{split}
\dD_{u^{H-1}}|_{\bar{U}} J&\overset{\eqref{eq_lemma1}}{=}\ell_u^{H-1}+(\text B^{H-1})^*\circ\psi^H\\
&\overset{\eqref{psi}}{=}\ell_u^{H-1}+(\text B^{H-1})^*\circ T_eL_{\bar{g}^H}^*\circ\dD_g F\\
&\overset{\eqref{vfinal}}{=}\ell_u^{H-1}+(\text B^{H-1})^*\circ\mathcal{V}_g^H\\
&\overset{\eqref{Qfunctions}}{=}Q_u^{H-1}.
\end{split}
\end{equation*}
Moreover, from \eqref{psi}
\begin{equation}
\label{qghmin1}
\psi^{H-1}=\ell_g^{H-1}+(\Phi^{H-1})^*\circ \mathcal{V}_g^H\overset{\eqref{Qfunctions}}{=}Q_g^{H-1}.
\end{equation}
It is thus clear from \eqref{dustar} that
\begin{equation*}
\begin{split}
&\dD_{u^{H-1}}|_{\bar{U}} J\cdot\delta u^{H-1}=\\
&-\gamma\langle Q_u^{H-1}, (Q_{uu}^{H-1})^{-1}(Q_u^{H-1})+(Q_{uu}^{H-1})^{-1}\circ Q_{ug}^{H-1}(\zeta^{H-1})\rangle\\
&\overset{\eqref{backv}}{=}-\gamma\langle Q_u^{H-1}, (Q_{uu}^{H-1})^{-1}(Q_u^{H-1})\rangle-\langle\mathcal{V}_g^{H-1}-Q_g^{H-1},\zeta^{H-1}\rangle\\
&\overset{\eqref{qghmin1}}{=}-\gamma\langle Q_u^{H-1}, (Q_{uu}^{H-1})^{-1}(Q_u^{H-1})\rangle-\langle\mathcal{V}_g^{H-1}-\psi^{H-1},\zeta^{H-1}\rangle.
\end{split}
\end{equation*}
Now, assume that identity \eqref{eq0_lemma1} is satisfied for $k+1$. Our goal is to show that it holds for $k$ as well. Note that by using a similar argument as in \cite[Lemma 4]{liao000}, one can show that $\zeta^k=O(\gamma)$ and $\delta u^k=O(\gamma)$, $\forall k$. Then, we obtain
\begin{equation*}
\begin{split}
&\sum_{i=k}^{H-1}  \dD_{u^i}|_{\bar{U}} J\cdot\delta u^i \overset{\eqref{eq0_lemma1}}{=} D_{u^k}|_{\bar{U}} J\cdot\delta u^k\\
&\hspace{1.3mm}-\gamma \sum_{i=k+1}^{H-1} \langle Q_u^i, (Q_{uu}^i)^{-1}(Q_u^i)\rangle+ \langle\mathcal{V}_g^{k+1} - \psi^{k+1},\zeta^{k+1}\rangle+O(\gamma^2)\\
&\stackrel{\eqref{linzeta1},\eqref{eq_lemma1}}{=\joinrel=\joinrel=\joinrel=}\langle\ell_u^k+(\text B^k)^*\circ\psi^{k+1},\delta u^k\rangle-\gamma \sum_{i=k+1}^{H-1}\langle Q_u^i, (Q_{uu}^i)^{-1}(Q_u^i)\rangle\\
&\quad+ \langle\mathcal{V}_g^{k+1} - \psi^{k+1},\Phi^{k}\zeta^{k}+\text B^{k}\delta u^{k}\rangle+O(\gamma^2)\\
&=\langle\underbrace{\ell_u^k+(\text B^k)^*\circ\mathcal{V}_g^{k+1}}_{Q_u^k},\delta u^k\rangle-\gamma \sum_{i=k+1}^{H-1} \langle Q_u^i, (Q_{uu}^i)^{-1}(Q_u^i)\rangle\\
&\quad+\langle(\Phi^{k})^*\circ\mathcal{V}_g^{k+1} - \underbrace{(\Phi^{k})^*\circ\psi^{k+1}}_{\psi^k-\ell_g^k},\zeta^{k}\rangle+O(\gamma^2)\\
&\overset{\eqref{dustar}}{=}-\gamma\sum_{i=k}^{H-1}\langle Q_u^i, (Q_{uu}^i)^{-1}(Q_u^i)\rangle+\langle\underbrace{(\Phi^{k})^*\circ\mathcal{V}_g^{k+1}+\ell_g^k}_{Q_g^k}-\psi^k,\zeta^k\rangle\\
&\quad+\langle \underbrace{-Q_{gu}^k\circ(Q_{uu}^k)^{-1}\circ Q_u^k}_{\mathcal{V}_g^k-Q_g^k},\zeta^k\rangle+O(\gamma^2)\\
&=-\gamma\sum_{i=k}^{H-1}\langle Q_u^i, (Q_{uu}^i)^{-1}(Q_u^i)\rangle+ \langle\mathcal{V}_g^k - \psi^k,\zeta^k\rangle+O(\gamma^2).
\end{split}
\end{equation*}

Finally, since $g^0$ is fixed, we have $\zeta^0=0$. Thus, evaluating eq. \eqref{eq0_lemma1} at $k\equiv0$, gives: $\dD_U|_{\bar{U}}J\cdot\delta U=\sum_{i=0}^{H-1}  \dD_{u^i}|_{\bar{U}} J\cdot\delta u^i=-\gamma\sum_{i=0}^{H-1}\langle Q_u^i, (Q_{uu}^i)^{-1}(Q_u^i)\rangle+O(\gamma^2)$, which concludes the proof.
\end{proof}

\begin{cor}\label{thconv2}
Suppose Assumption \ref{ass1} holds. Then, Algorithm \ref{alg:ddp} will converge to a stationary solution of \eqref{optprob}.
\end{cor}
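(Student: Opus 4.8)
The plan is to use Theorem \ref{th1} to certify that $\delta U_\star$ is a genuine descent direction for $J$, and then to run a monotone-convergence argument on the cost sequence, following the Euclidean treatment of \cite{liao000}. First I would invoke Assumption \ref{ass1}(ii): since $Q_{uu}^k$ is positive definite, so is $(Q_{uu}^k)^{-1}$, hence each term $\langle Q_u^i,(Q_{uu}^i)^{-1}(Q_u^i)\rangle$ is nonnegative. By Theorem \ref{th1} the directional derivative $\mathsf D_U|_{\bar U}J\cdot\delta U=-\gamma\sum_i\langle Q_u^i,(Q_{uu}^i)^{-1}(Q_u^i)\rangle+O(\gamma^2)$ is therefore strictly negative for all sufficiently small $\gamma>0$, unless $Q_u^i=0$ for every $i$. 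Because $J$ is twice differentiable, this guarantees that the inner backtracking loop of Algorithm \ref{alg:ddp} (which shrinks $\gamma\leftarrow\mathsf h\gamma$ until $J-\bar J\le0$) terminates after finitely many steps and yields a strict cost decrease whenever the current iterate is not already stationary.

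Next I would establish monotonicity and boundedness. By Assumption \ref{ass1}(i) the feasible control set $\mathcal U$ is compact and $J$ is continuous, so the cost is bounded below; combined with the strict decrease above, the sequence of nominal costs $\{\bar J\}$ is monotone nonincreasing and convergent, and the iterates admit a convergent subsequence. The quantitative core, mirroring \cite[Lemma 4]{liao000}, is to bound the per-iteration decrease below by a positive multiple of $\gamma\sum_i\langle Q_u^i,(Q_{uu}^i)^{-1}(Q_u^i)\rangle$, uniformly along the iteration. Convergence of $\{\bar J\}$ then forces this descent quantity to vanish at any accumulation point.

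Finally I would translate the vanishing of the descent term into stationarity. Compactness of $\mathcal U$ and continuity of $Q_{uu}^k$ ensure its eigenvalues are bounded above by a uniform constant $\lambda_{\max}$, whence $\langle Q_u^i,(Q_{uu}^i)^{-1}(Q_u^i)\rangle\ge\lambda_{\max}^{-1}\|Q_u^i\|^2$, so the descent term tending to zero yields $Q_u^i\to0$ for all $i$ at the accumulation point. To identify this with a stationary point of \eqref{optprob}, I would exploit the coincidence of the two backward recursions: setting $Q_u^k=0$ in the backpropagation \eqref{backv} gives $\mathcal{V}_g^k=Q_g^k=\ell_g^k+(\Phi^k)^*\circ\mathcal{V}_g^{k+1}$, which together with the terminal condition \eqref{vfinal} is precisely the recursion \eqref{psi} defining $\psi^k$; hence $\mathcal{V}_g^k=\psi^k$ for every $k$. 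Substituting into $Q_u^k=\ell_u^k+(\text B^k)^*\circ\mathcal{V}_g^{k+1}$ from \eqref{Qfunctions} and applying Lemma \ref{lemm:psi} then shows $\mathsf D_{u^k}|_{\bar U}J=\ell_u^k+(\text B^k)^*\circ\psi^{k+1}=Q_u^k=0$ for all $k$, so the limit is a stationary solution.

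The hardest part will be the uniform lower bound on the per-iteration decrease together with the control of the $O(\gamma^2)$ remainder of Theorem \ref{th1} uniformly over $\mathcal U$. This amounts to showing that the accepted line-search parameter does not collapse to zero along the iteration, or equivalently that the estimates $\zeta^k=O(\gamma)$ and $\delta u^k=O(\gamma)$ used in the proof of Theorem \ref{th1} hold with constants independent of the iterate; this is exactly the delicate estimate that must be adapted from \cite{liao000} to the present Lie-group setting, where the perturbations $\zeta^k$ evolve on $\mathfrak g$ through the linearization \eqref{linzeta1}.
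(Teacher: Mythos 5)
Your proposal is correct and follows essentially the same route as the paper's proof: descent from Theorem \ref{th1} plus Assumption \ref{ass1}, monotone convergence of the cost on the compact set $\mathcal{U}$, vanishing of $Q_u^k$ via positive definiteness of $Q_{uu}^k$, and then the identification $\mathcal{V}_g^k=\psi^k$ through the coincidence of the recursions \eqref{backv} and \eqref{psi}, concluding with Lemma \ref{lemm:psi} that $\mathsf D_{u^k}|_{\bar U}J=Q_u^k=0$. Your treatment is in fact somewhat more careful than the paper's, which simply asserts the existence of a suitable $\gamma$ without addressing the uniformity of the $O(\gamma^2)$ remainder that you correctly flag as the delicate step.
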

\begin{proof}
The proof is in appendix \ref{appproof2}.
\end{proof}
\begin{remark}\label{rem:theta}
Notice that $T\mathbb{R}^m\simeq\mathbb{R}^m$ is simply the set of all column vectors in $\mathbb{R}^m$, while its dual comprises of row vectors in $\mathbb{R}^{1\times m}$. Hence, we account for Assumption \ref{ass1}-(ii) by setting
\begin{equation*}
Q_{uu}^k\leftarrow Q_{uu}^k+\lambda I_m,
\end{equation*}
at each $t_k$. This modification is incorporated in step \ref{stepQ} of Algorithm \ref{alg:ddp}. Here, $I_m\in\mathbb{R}^{m\times m}$ is the identity matrix, and $\lambda>0$ is a regularization parameter that enforces the positive definiteness of $Q_{uu}^k$ for all $k$ \cite{Nocedal}.
\end{remark}

{\it Convergence rate:} In \cite{ddp0} the authors showed that DDP achieves locally quadratic convergence rates forb Euclidean spaces. In our simulated examples we show that such behavior is possible also for Lie groups, when a second-order expansion scheme from section \ref{sec:zetaexp} is used. Developing the corresponding theoretical analysis is a topic under investigation.

\section{Simulations}\label{sec:sim}
The proposed scheme is employed here to control a mechanical system in simulation. We will find that under certain specifications, the Lie-theoretic formulation of section \ref{sec:ddp_der} can be implemented via simple matrix multiplications. Finally, numerical results are included to demonstrate the behavior and efficiency of our algorithm.

We consider the dynamics of a rigid satellite whose state evolves on the tangent bundle $TSO(3)$. The continuous equations of motion are given by \cite{crouchspacecraft}
\begin{equation}
\label{conteqs}
\begin{split}
\dot{R}&=R\widehat{\Omega},\\
\dot{\Omega}&=\mathbb{I}^{-1}\big((\mathbb{I}\Omega)\times\Omega+\mathbb{H}u\big).
\end{split}
\end{equation}
Here, $R\in SO(3)$ is the rotation matrix, $\Omega\in\mathbb{R}^3$ the body-fixed velocity, and $u\in\mathbb{R}^m$ the control input vector. Moreover, $\mathbb{I}\in\mathbb{R}^{3\times3}$ denotes the inertia tensor, and $\mathbb{H}\in\mathbb{R}^{3\times m}$ is a matrix whose columns represent the axis about which the control torques are applied. The isomorphism $\widehat{\cdot}:\mathbb{R}^3\rightarrow\mathfrak{so}(3)$ maps a column vector into a skew-symmetric matrix as follows:
\[
\begin{bmatrix}
\mathrm{x}_{1}\\\mathrm{x}_{2}\\\mathrm{x}_{3}
\end{bmatrix}^{\widehat{}}:=\begin{bmatrix}0&-\mathrm{x}_{3}&\mathrm{x}_{2}\\
\mathrm{x}_{3}&0&-\mathrm{x}_{1}\\
-\mathrm{x}_{2}&\mathrm{x}_{1}&0
\end{bmatrix}.
\]
Its inverse, $\vc{\cdot}:\mathfrak{so}(3)\rightarrow\mathbb{R}^3$, is defined such that $\vc{\widehat{\mathrm x}}=\rm x$, for all $\rm x\in\mathbb{R}^3$. Through these mappings we can treat the state as $g=(R,\Omega)\in SO(3)\times\mathbb{R}^3$ (see, e.g., \cite{sac} for more details).

Next, we discretize \eqref{conteqs}. Let us use the forward Euler method, which reads:
\begin{equation}
\label{disceqs}
\begin{split}
R^{k+1}&=R^{k}\exp_{SO(3)}{\big(\widehat{\Omega^{k}}\Delta t\big)},\\
\Omega^{k+1}&=\underbrace{\Omega^k+\Delta t\mathbb{I}^{-1}\big((\mathbb{I}\Omega^k)\times\Omega^k+\mathbb{H}u^k\big)}_{=:f_{\Omega}^k(R^k,\Omega^k,u^k)},
\end{split}
\end{equation}
with $\Delta t$ being the time-step, and $\exp_{SO(3)}(\cdot)$ being the usual matrix exponential. In what follows, let $I_n$, $0_{n\times m}$  denote the $n\times n$ identity and $n\times m$ zero matrix, respectively. Additionally, $||C||_{m,S}:=\sqrt{\text{trace}[C^\top SC]}$ is the weighted Frobenius matrix norm, and $||c||_{v,S}:=\sqrt{c^\top Sc}$ corresponds to the standard Euclidean weighted norm, for all $C,S\in\mathbb{R}^{n\times n}$, $c\in\mathbb{R}^{n}$ and $S>0$.

Our optimal control problem is formulated as:
\begin{equation}
\label{optprobR}
\begin{split}
&\min_{\{u^k\}_0^{H-1}}\hspace{1.7mm} \big[\sum_{k=0}^{H-1}\Lambda^k(R^k,\Omega^k,u^k)+F(R^H,\Omega^H)\big]\\
&\hspace{4mm}\text{s.t.}\quad \text{eq. } \eqref{disceqs},\quad R^0 = \bar{R}^0,\quad \Omega^0 = \bar{\Omega}^0,
\end{split}
\end{equation}
with
\begin{equation}
\label{costR}
\begin{split}
&\Lambda^k(R^k,\Omega^k,u^k):=\frac{1}{2}||u^k||^2_{v,S_u},\\ 
&F(R^H,\Omega^H):=\frac{1}{2}||I_3-R_d^\top R^H||_{m,S_R}^2+\frac{1}{2}||\Omega^H-\Omega_{d}||_{v,S_\Omega}^2.
\end{split}
\end{equation}
In this setting, the algorithm will penalize trajectories with high control inputs, and terminal state far from $(R_d,\Omega_d)$.

We now compute the derivatives of $\Lambda^k$ and $F$ which are used by the $Q$ functions of DDP. Consider, first, the terminal cost. One will have
\begin{flalign}
\label{gradF}
&\nonumber\langle\dD F(g^H),g^H\zeta\rangle=\left.\frac{\mathrm d}{\rm ds}\right|_{s=0}F(g^H\exp(s\zeta))=\\
&\nonumber\text{trace}\big[-(R_d^\top R^H\eta)^\top S_R(I_3-R_d^\top R^H)\big]+\rho^\top S_\Omega(\Omega^H-\Omega_{d})=\\
&\big(\vc{2\text{skew}(S_RR_d^\top R^H)}\big)^\top\vc{\eta}+\big(S_\Omega(\Omega^H-\Omega_{d})\big)^\top\rho,&
\end{flalign}
\begin{flalign}
\label{hessF}
&\nonumber\langle\text{Hess}^{(0)} F(g^H)(g^H\zeta_1),g^H\zeta_2\rangle=\\
&\nonumber\frac{1}{2}\big((g^H\zeta_1)(g^H\zeta_2)+(g^H\zeta_2)(g^H\zeta_1)\big)(F(g^H))=\\
&\nonumber \frac{1}{2}\scalebox{1.11}{$\left.\frac{\partial^2\big(F(g^H\exp(s_1\zeta_1)\exp(s_2\zeta_2))+F(g^H\exp(s_2\zeta_2)\exp(s_1\zeta_1))\big)}{\partial s_1\partial s_2}\right|_{0}$}=\\
&(\vc{\eta_1})^\top\big(\text{sym}(\text{trace}[S_RR_d^\top R^H]I_3-S_RR_d^\top R^H)\big)\vc{\eta_2}+\rho_1^\top S_\Omega\rho_2,&
\end{flalign}
where we have used the following identities: $\exp(\cdot)\equiv\exp_{TSO(3)}(\cdot)$, $\text{sym}(\mathsf N):=\frac{1}{2}(\mathsf N+\mathsf N^\top)$, $\text{skew}(\mathsf N):=\frac{1}{2}(\mathsf N-\mathsf N^\top)$, $\zeta_i=(\eta_i,\rho_i)\in\mathfrak{so}(3)\times\mathbb{R}^3$, $\text{trace}[\text{sym}(\mathsf N)\text{skew}(\mathsf M)]=0$, $\text{trace}[\widehat{\mathsf x}^\top\mathsf S]=2\mathsf x^\top\vc{\text{skew}(\mathsf S)}$, $\text{trace}[\widehat{\mathsf x}^\top\mathsf S\widehat{\mathsf y}]=\mathsf y^\top(\text{trace}[\mathsf S]I_3-\mathsf S)\mathsf x$, for all $\mathsf N, \mathsf M\in\mathbb{R}^{n\times n}$, $\mathsf x,\mathsf y\in\mathbb{R}^3$, $\mathsf S\in\mathbb{R}^{3\times3}$. Regarding the first and second equalities of \eqref{hessF}, one can refer to \cite[page 319]{Mahony2002} and \cite[eq. 2.12.5]{varadarajanbook}, respectively.

Now, let $\vd{\cdot}:\mathfrak{g}^*\rightarrow\mathbb{R}^{1\times n}$ be a mapping such that $\langle\mu,\mathsf x\rangle=\vd{\mu}\vc{\mathsf x}$, with $\vc{\mathsf x}\in\mathbb{R}^n$ being the (column) vector representation of $\mathsf x\in\mathfrak{g}$. In our example, this is equivalent to specifying the basis of $\mathfrak{so}^*(3)$ as $\{(\widehat{e_i})^\top\}_1^3$ (where $\{e_i\}_1^3$ denotes the canonical basis in $\mathbb{R}^3$), and using $\langle\phi,\zeta\rangle=\frac{1}{2}\text{trace}(\phi\zeta)$, for each $\phi\in\mathfrak{so}(3)^*$, $\zeta\in\mathfrak{so}(3)$. Let also $\md{\mathsf H}$ denote the matrix representation of a linear operator $\mathsf H:\mathfrak{g}\rightarrow\mathfrak{g}^*$, so that $\vd{\mathsf H(\zeta)}=(\vc{\zeta})^\top\md{\mathsf H}$. Then, equations \eqref{leftell}, \eqref{costR}, \eqref{gradF} and \eqref{hessF} imply
\begin{equation*}
\begin{split}
&\ell_u^k(g^k,u^k)=S_uu^k,\quad\ell_{uu}^k(g^k,u^k)=S_u,\quad\vd{\ell_g^k(g^k,u^k)}={0}_{1\times6},\\
&\md{\ell_{gg}^k(g^k,u^k)}={0}_{6\times6},\quad\md{\ell_{gu}^k(g^k,u^k)}={0}_{6\times m},\\
&\vd{T_eL_{g^H}^*\circ\mathsf DF(g^H)}=\begin{bmatrix}
2\vc{\text{skew}(S_RR_d^\top R^H)}\\S_\Omega(\Omega^H-\Omega_d)
\end{bmatrix}^\top,\\
&\md{T_eL_{g^H}^*\circ\text{Hess}^{(0)}F(g^H)\circ T_eL_{g}^H}=\\
&\hspace{18mm}\begin{bmatrix}
\text{sym}(\text{trace}[S_RR_d^\top R^H]I_3-S_RR_d^\top R^H)&{0}_{3\times3}\\{0}_{3\times3}&S_\Omega
\end{bmatrix}.
\end{split}
\end{equation*}
Notice that under these identifications, each step of Algorithm \ref{alg:ddp} can be implemented through matrix/vector products. For instance, $Q_{gg}$ from \eqref{Qfunctions}, and $\mathcal{V}_g$ from \eqref{backv} become
\begin{equation}
\label{Qmatrices}
\begin{split}
\md{Q_{gg}^k}=&\md{\ell_{gg}^k}+(\varPhi^k)^\top\md{\mathcal{V}_{gg}^{k+1}}\varPhi^k+\sum_{i=1}^6(\vd{\mathcal{V}_{g}^{k+1}})_i\varTheta_{(i)}^k,\\
\vd{\mathcal{V}_g^k}=&\vd{Q^k_g}-Q^k_u(Q_{uu}^k)^{-1}\md{Q^k_{gu}},
\end{split}
\end{equation}
where $\varPhi$, $\varTheta$ denote the matrix representations of $\Phi$, $\Theta$ respectively (see eq. \eqref{linzetamatrix}). The remaining terms can be computed similarly. We observe that the expressions in \eqref{Qmatrices} accord (up to the first order only) with those used in \cite{kobilarovddplie, liegroupddp_estimation}. Nonetheless, the derivation of section \ref{sec:ddp_der} holds for generic basis and pairing selections on $(\mathfrak{g}^*,\mathfrak{g})$. 

It remains to determine the linearization matrices for \eqref{disceqs}. We will apply the corresponding expressions from section \ref{sec:zetaexp} and appendix \ref{app:matrixform}, by setting $R\rightarrow\chi$ and $\Omega\rightarrow\vc{\xi}$. 
Direct differentiation of \eqref{disceqs} yields
\begin{equation*}
\begin{split}
&\vd{T_eL_{R^k}^*\circ\mathsf D_R(f_\Omega^k)_i} ={0}_{1\times3}, \quad \mathsf D_u(f_\Omega^k)=\Delta t\mathbb{I}^{-1}\mathbb{H},\\
&\mathsf D_\Omega(f_\Omega^k)=I_3+\Delta t\mathbb{I}^{-1}\big(-\widehat{\Omega}\mathbb{I}+\widehat{\mathbb{I}\Omega}\big),\\
&\dD_{\Omega_j}\dD_{\Omega_i}(f_\Omega^k)=\Delta t\mathbb{I}^{-1}\big((\widehat{\mathbb{I}e_i})e_j+(\widehat{\mathbb{I}e_j})e_i\big),\\
&\varTheta^k_{\chi\chi(i)}={0}_{6\times6},\quad \varTheta^k_{\chi\xi(i)}={0}_{6\times6},\quad\text{for all}\hspace{1.8mm}i>3,\\
&\Gamma_{(i)}^k={0}_{6\times m},\quad \Delta_{(i)}^k={0}_{m\times6},\quad \Xi_{(i)}^k={0}_{m\times m},\quad\text{for all}\hspace{1.8mm} i.
\end{split}
\end{equation*}
Lastly, note that in $SO(3)$, one has $\vc{\text{Ad}_R(\hat{\eta})}=R\eta$ and $\vc{\text{ad}_{\hat \zeta}(\hat{\eta})}=\hat{\zeta}\eta$.

{\it Numerical results:} Table \ref{tab:par} includes the parameters that were used in our simulations. The convergence criterion was set to $|J_{(i)}-J_{(i-1)}|\leq10^{-8}$, with $J_{(i)}$ being the cost at iteration $i$. When $Q_{uu}^k$ was found non positive definite for some $k$, the regularization parameter $\lambda$ from remark \ref{rem:theta} was increased as $\lambda\leftarrow1.9\lambda$, until satisfying the corresponding condition. In addition, $\mathsf h$ from step \ref{steph} of Algorithm \ref{alg:ddp} was set to $1/3$. Lastly, DDP was initialized with zero nominal controls. The obtained (sub)optimal state trajectory and control sequence are given in Figure \ref{fig:trajectories}, along with the desired final states, $R_d$ and $\Omega_d$. In these graphs, we have plotted the attitude by using a unit quaternion representation for each rotation matrix.

Next, we evaluate the effect the linearization schemes \eqref{linzeta} and \eqref{linzetamatrix} have on DDP's performance. In particular, we compare between using the linear terms only, as opposed to applying the full second-order expansion. We find that the first-order scheme does much better at the early stages of optimization, but fails to give superlinear convergence. In contrast, employing the second-order terms leads to $Q_{uu}^k$ being non positive definite in the first iterations, which slows down cost improvement. As we approach our solution, though, the higher order terms allow for quadratic-like convergence rates.

We propose switching between the two schemes depending on the relative change of the cost function. In this way, we can leverage the benefits of each approach and arrive faster at a stationary solution. Specifically, we begin to optimize by applying the linear terms only. When $J_{(i)}/J_{(0)}<\sigma$ (for some prespecified bound $\sigma$), the full expansion is used. In our example we picked $\sigma=0.1$, and the switching occurred after the first iteration of DDP. The aforementioned results and observations are illustrated in Figure \ref{fig:conv}.
\begin{table}
	\centering
	\caption{Parameter values used in simulations ($Rot_i(\theta)$ denotes an intrinsic rotation about the $i$ axis by an angle $\theta$).}
	\label{tab:par}
	\begin{tabular}{cc||cc}
		\toprule
		$t_f$ & 3 &  $\bar{R}^0$ &  $I_3$\\
		
		$\Delta t$ & $0.01$ &  $\bar{\Omega}^0$ &  $\textbf{0}_{3\times1}$\\
		
		$\mathbb{I}$ & $\text{diag}(10,11.1,13)$ & $R_d$  &  $Rot_x(30^\circ)Rot_z(70^\circ)$\\
		
		$\mathbb{H}$ & $I_3$ &  $\Omega_d$ &  $\textbf{0}_{3\times1}$\\
		$S_u$ & $0.1I_3$ & $S_R,S_\Omega$ & $10^4I_3$ \\
		\bottomrule
	\end{tabular}
\end{table}
\begin{figure*}[!t]
	\centering
	\includegraphics[width=0.316\textwidth]{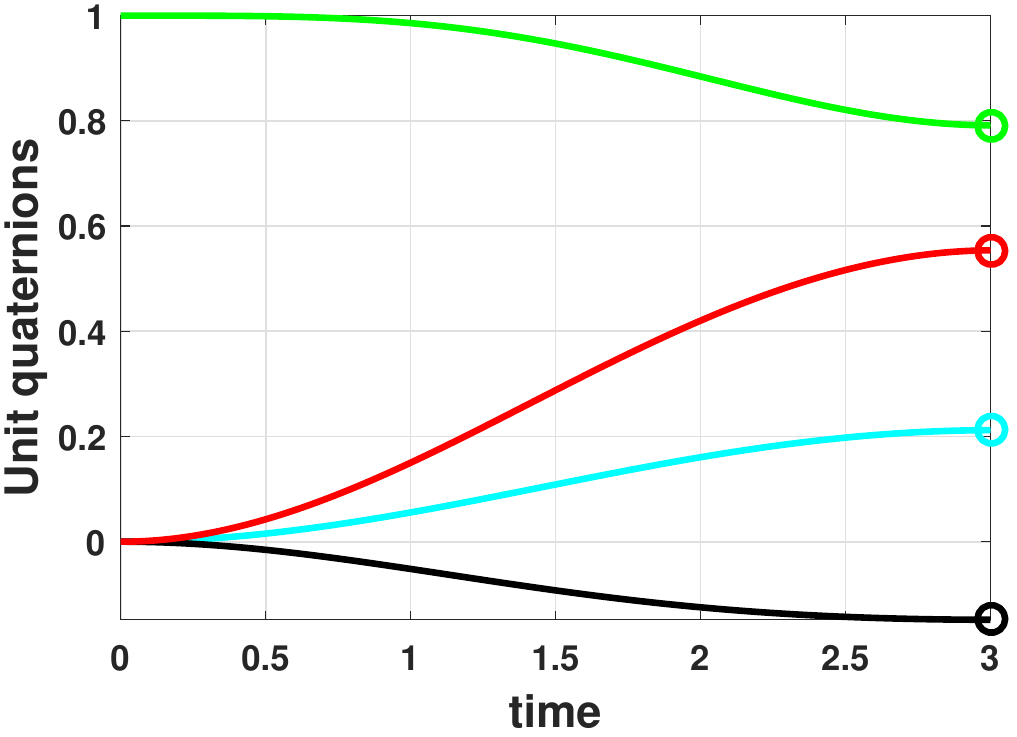}
	\includegraphics[width=0.32\textwidth]{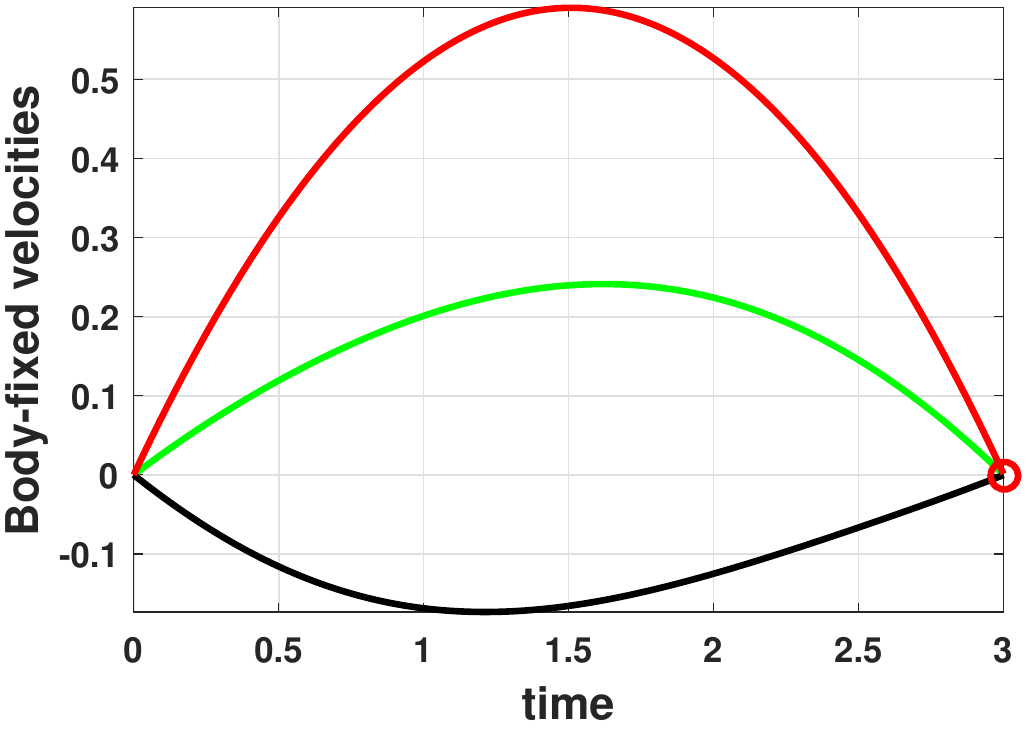}
	\includegraphics[width=0.31\textwidth]{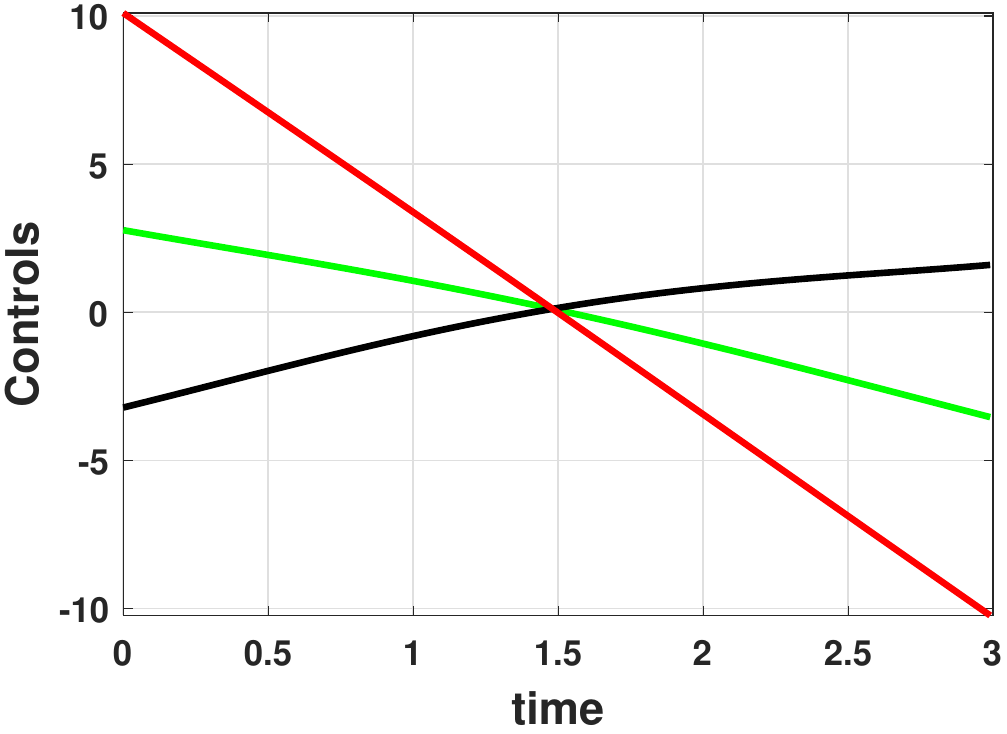}
	
	\caption{Illustration of DDP's (sub)optimal solution - the obtained state trajectory and controls are depicted. The first graph from the left uses unit quaternions to represent the sequence of rotation matrices over time. Each circle at the terminal time instant corresponds to the desired states, $R_d$ or $\Omega_d$.}
	\label{fig:trajectories}
\end{figure*}
\begin{figure}
	\centering
	\subfigure{
		\includegraphics[width=0.47\textwidth]{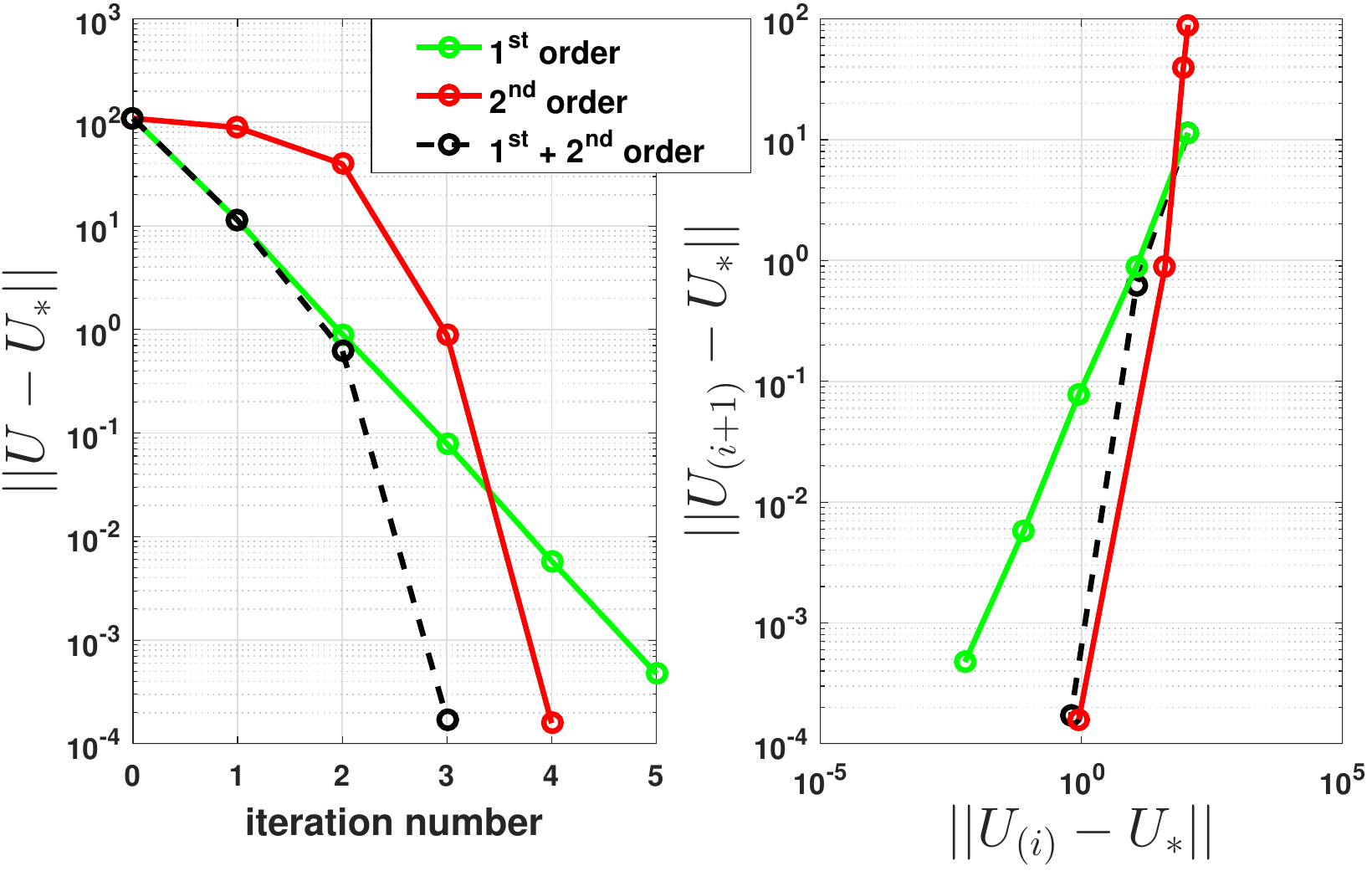}}
	\caption{Convergence-related results of DDP for different linearization schemes. The solid green line is associated with a linear expansion, and the solid red line with a quadratic expansion. $U_*$ corresponds to the (sub)optimal solution each setting converges to, and $U_{(i)}$ denotes the controls sequence at iteration $i$. The second-order scheme achieves locally superlinear convergence rates, but is slower in the first iterations. In light of this, the dashed black line employs higher-order terms from the second iteration, obtaining, thus, the solution in fewer steps.}\label{fig:conv}
\end{figure}

{\it Comparison with off-the-self optimization method:} A common approach for solving discrete optimal control problems is using off-the-self optimization solvers. In this way, existing algorithms can be applied with little to no modification. Moreover, feasible trajectories can be generated by simply treating the dynamics as equality constraints. In the context of geometric control, \cite{kobilarovdiscrete} applied a direct-optimization method for controlling a variational integration scheme of mechanical systems.

As a benchmark comparison, we employed $\textsc{Matlab}$'s built-in SQP implementation for solving problem \eqref{optprobR}. Regarding the decision variables vector, we simply selected $(\text{vec}(R^0)^\top,...,\text{vec}(R^H)^\top,(\Omega^0)^\top,...,(\Omega^H)^\top,U^\top)^\top\in\mathbb{R}^{12H+m(H-1)}$. We also provided derivative information for the cost function and equality constraints to speed up convergence. Lastly, the feasibility tolerance for the dynamics was set to $10^{-6}$.

Figure \ref{fig:sqp} compares the total cost per iteration between Differential Dynamic Programming and SQP. The two methods reach the same solution, but DDP requires much fewer iterations. In addition, our $\textsc{Matlab}$ implementation of DDP converged in 1.9 s, with SQP being approximately 300 times slower. Last but not least, the SQP solver did not yield feasible dynamics until the $9^{\text{th}}$ iteration. This significant difference in performance is observed because a direct optimization method will (i) increase the dimension of the decision vector, (ii) search in a space under many equality constraints, and (iii) scale cubically with the time horizon.
\begin{figure}
	\centering
	\includegraphics[width=0.34\textwidth]{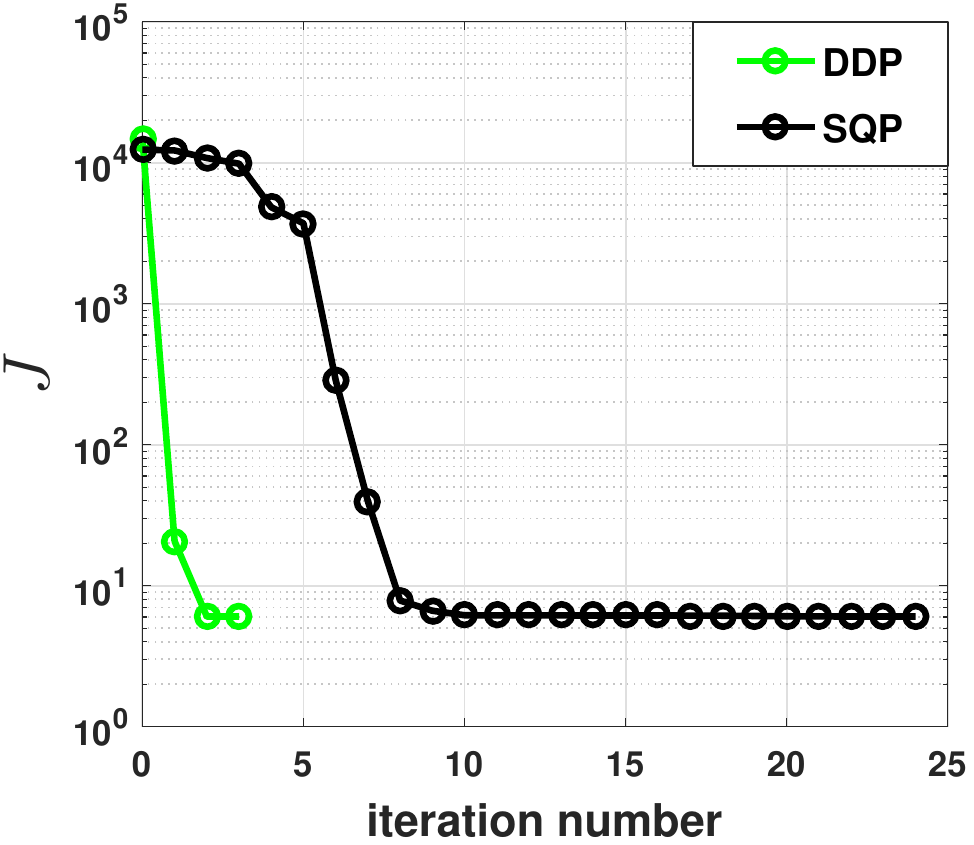}
	\caption{Comparison between Differential Dynamic Programming and SQP. The two approaches give the same solution, but DDP takes significantly less time and iterations. Furthermore, the SQP-solver yields infeasible dynamics for a large number of steps.}\label{fig:sqp}
\end{figure}

\section{Conclusion}\label{sec:conc}
In this paper we extended Differential Dynamic Programming from Euclidean models, to systems evolving on Lie groups. We focused on multiple aspects including the derivation, convergence properties, and practical implementation of the methodology. By utilizing a differential geometric approach, we handled problems defined on nonlinear configuration spaces. We also developed quadratic expansions for discrete mechanical systems, which were incorporated in our framework. The obtained geometric control algorithm preserved important characteristics of the original scheme, and, thus, outperformed standard optimization methods in simulation.

Despite the aforementioned contributions, certain topics require further investigation. For example, developing an analysis for the convergence rate of geometric Differential Dynamic Programming is necessary to establish the properties of the algorithm. Moreover, our work relied on the Cartan-Schouten connections. For a different affine connection, the form of Algorithm \ref{alg:ddp} is expected to vary. Finally, an interesting extension would be to develop a stochastic version of DDP on Lie groups. This could further increase its applicability when dealing with real autonomous systems.

\appendices

\section{On the Hessian operators of Cartan connections} \label{app:hess}
This section is a review of \cite{Mahony2002} with the simple addition of including all three Cartan connections in the analysis.

Let $gx$, $gy\in\mathfrak{X}$ be two left-invariant vector fields, with $x,y\in\mathfrak{g}$ and $g\in G$. As shown in \cite[page 319]{Mahony2002}, the Hessian of a function $\mathsf f:G\rightarrow\mathbb{R}$ with respect to the (0) connection satisfies
\[\text{Hess}^{(0)}\mathsf f(g)(gx)(gy)=\frac{1}{2}\big((gx)(gy)+(gy)(gx)\big)(\mathsf f(g)).\]

Now, from the definition of the torsion tensor and the corresponding connection functions (see section \ref{sec:prel}), it is easy to show that: $\mathcal T^{(0)}(gx,gy)= 0$, $\mathcal T^{(-)}(gx,gy)=-[gx,gy]$, and $\mathcal T^{(+)}(gx,gy)=[gx,gy]$. Hence, one can obtain for the remaining Hessians:
\begin{equation*}
\begin{split}
&\text{Hess}^{(-)}\mathsf f(g)(gx)(gy)=(gx)\big((gy)(\mathsf f(g))\big)=\\
&\big[\frac{1}{2}\big((gx)(gy)+(gy)(gx)\big)+\frac{1}{2}\big((gx)(gy)-(gy)(gx)\big)\big](\mathsf f(g))=\\
&\text{Hess}^{(0)}\mathsf f(g)(gx)(gy)+\frac{1}{2}\mathcal T^{(-)}(gy,gx)(\mathsf f(g)),
\end{split}
\end{equation*}
\begin{equation*}
\begin{split}
&\text{Hess}^{(+)}\mathsf f(g)(gx)(gy)=(gx)\big((gy)(\mathsf f(g))\big)-g[x,y](\mathsf f(g))=\\
&\big[\frac{1}{2}\big((gx)(gy)+(gy)(gx)\big)+\frac{1}{2}\big((gy)(gx)-(gx)(gy)\big)\big](\mathsf f(g))=\\
&\text{Hess}^{(0)}\mathsf f(g)(gx)(gy)+\frac{1}{2}\mathcal T^{(+)}(gy,gx)(\mathsf f(g)),
\end{split}
\end{equation*}
where we have used that $g[x,y]=[gx,gy]$ (see \cite{Gallier_notes} for proof). Since $T^{(+)}(gx,gx)=T^{(-)}(gx,gx)= 0$, $\forall x\in\mathfrak{g}$, we have that: $\text{Hess}^{(+)}\mathsf f(g)(gx)(gx)=\text{Hess}^{(-)}\mathsf f(g)(gx)(gx)=\text{Hess}^{(0)}\mathsf f(g)(gx)(gx)=(\text{Hess}^{(0)}\mathsf f(g))^*(gx)(gx)$. Therefore, when considering quadratic expansions as in \eqref{expandv}, we are only left with a symmetric second-order term.

\section{The Baker-Campbell-Hausdorff formula} \label{sec:bch}
The Baker-Campbell-Hausdorff (BCH) formula is stated in the following theorem. A more detailed treatment can be found in \cite{Gallier_notes} and \cite{varadarajanbook}.
\begin{theorem}
Let $G$ denote a Lie group, with $\mathfrak{g}$ being its Lie algebra. Then, there exists an open set $\mathfrak{g}_e^2\subseteq\mathfrak{g}\times\mathfrak{g}$ which contains $(0,0)$, such that for all $(\mathsf x,\mathsf y)\in\mathfrak{g}_e^2$
\[\exp(\mathsf x)\exp(\mathsf y)=\exp(\mu(\mathsf x,\mathsf y)).\]
Specifically, $\mu:\mathfrak{g}_e^2\rightarrow\mathfrak{g}$ is a real-analytic mapping that can be expanded as
\begin{equation}
\begin{split}
\label{bch}
\mu(\mathsf x,\mathsf y)=&\mathsf x+\mathsf y+\frac{1}{2}[\mathsf x,\mathsf y]+\frac{1}{12}\big([\mathsf x,[\mathsf x,\mathsf y]]+ [\mathsf y,[\mathsf y,\mathsf x]]\big)+\\
&\frac{1}{24}[\mathsf x,[\mathsf y,[\mathsf y,\mathsf x]]]+\text{higher order terms}
\end{split}
\end{equation}
\end{theorem}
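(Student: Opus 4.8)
The plan is to split the claim into two parts: existence of a real-analytic map $\mu$ defined on an open neighborhood of $(0,0)$ satisfying $\exp(\mathsf x)\exp(\mathsf y)=\exp(\mu(\mathsf x,\mathsf y))$, and the explicit expansion \eqref{bch} of its low-order terms.

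For existence and analyticity I would invoke the inverse function theorem. Since $\dD\exp(0)=\mathrm{id}_{\mathfrak g}$, the exponential map is a real-analytic local diffeomorphism from a neighborhood of $0\in\mathfrak g$ onto a neighborhood of $e\in G$, so $\log=\exp^{-1}$ is real-analytic there. Group multiplication is itself real-analytic, hence $(\mathsf x,\mathsf y)\mapsto\exp(\mathsf x)\exp(\mathsf y)$ is real-analytic on $\mathfrak g\times\mathfrak g$ and sends $(0,0)$ to $e$. Taking $\mathfrak g_e^2$ to be the preimage of the domain of $\log$ under this product map yields an open set containing $(0,0)$ on which $\mu:=\log\big(\exp(\cdot)\exp(\cdot)\big)$ is well defined and real-analytic; this establishes the identity and, by analyticity, the convergence of the Taylor series of $\mu$ about $(0,0)$.

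To pin down the coefficients in \eqref{bch} I would use a differential-equation characterization compatible with the trivialization conventions of this paper. Fixing $\mathsf x,\mathsf y$ and setting $Z(s):=\log\big(\exp(\mathsf x)\exp(s\mathsf y)\big)$, so that $\exp(Z(s))=\exp(\mathsf x)\exp(s\mathsf y)$ and $Z(0)=\mathsf x$, I would differentiate in $s$ and use the right-trivialized tangent $\dD\exp(\xi)\cdot\zeta=T_eR_{\exp(\xi)}\dexp_\xi\zeta$ together with $T_eL_g=T_eR_g\circ\Ad_g$ and $\Ad_{\exp(Z)}=e^{\ad_Z}$. This gives $\dexp_Z\dot Z=e^{\ad_Z}\mathsf y$, and since $\dexp_Z^{-1}=\ad_Z/(e^{\ad_Z}-I)$, one obtains the bracket-form ODE $\dot Z=h(\ad_Z)\mathsf y$ with $h(w)=w/(1-e^{-w})$ analytic at $w=0$ and $h(0)=1$. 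Solving this by Picard iteration from $Z_0=\mathsf x$ and evaluating at $s=1$ reproduces $\mu(\mathsf x,\mathsf y)=Z(1)$ term by term; collecting contributions of total degree up to four recovers the displayed coefficients $\tfrac12$, $\tfrac1{12}$, $\tfrac1{24}$.

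The main advantage of this route is that the Lie-bracket structure is automatic: because the right-hand side $h(\ad_Z)\mathsf y=\sum_n c_n\,\ad_Z^n\mathsf y$ is a sum of iterated brackets, and $Z$ is built inductively from $\mathsf x,\mathsf y$ in the iteration, every term of $\mu$ is a Lie polynomial in $\mathsf x,\mathsf y$, so no separate appeal to Friedrichs' primitivity criterion is needed. The principal obstacle is therefore analytic rather than algebraic: one must ensure $Z(s)$ stays small enough that $\ad_{Z(s)}$ remains within the radius of convergence of $h$ for all $s\in[0,1]$, which forces a possible shrinking of $\mathfrak g_e^2$ and provides the estimate for the higher-order remainder. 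For the modest order needed in this paper, an alternative is to bypass the ODE entirely and expand $\log\big(\exp(\mathsf x)\exp(\mathsf y)\big)$ directly as a formal power series to degree four, checking by hand that the associative terms reassemble into the brackets of \eqref{bch}.
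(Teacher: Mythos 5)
The paper gives no proof of its own for this theorem---it defers entirely to Varadarajan's Section 2.15---and your proposal is essentially that classical argument: existence and analyticity of $\mu=\log\big(\exp(\cdot)\exp(\cdot)\big)$ near $(0,0)$ via the inverse function theorem, then the differential equation $\dot Z=h(\ad_Z)\mathsf y$ with $h(w)=w/(1-e^{-w})$ for $Z(s)=\log\big(\exp(\mathsf x)\exp(s\mathsf y)\big)$, whose term-by-term (Picard/series) solution produces the nested-bracket expansion. Your details check out, including the coefficient bookkeeping (the degree-four term you obtain, $-\tfrac{1}{24}[\mathsf y,[\mathsf x,[\mathsf x,\mathsf y]]]$, equals the stated $\tfrac{1}{24}[\mathsf x,[\mathsf y,[\mathsf y,\mathsf x]]]$ by the Jacobi identity), so this is the same route as the proof the paper cites.
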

\begin{proof}
See \cite[section 2.15]{varadarajanbook}.
\end{proof}
A closed-form version of \eqref{bch} that is linear with respect to each argument is given by
\begin{equation}
\label{bchlinear}
\begin{split}
\mu(\mathsf x,\mathsf y)&=\mathsf x+\text{dexp}^{-1}_{-\mathsf x}(\mathsf y)+O(||\mathsf y||^2)\\
&=\mathsf y+\text{dexp}^{-1}_{\mathsf y}(\mathsf x)+O(||\mathsf x||^2).
\end{split}
\end{equation}
Moreover, one can show the following \cite{varadarajanbook}
\begin{equation}
\label{bch1}
\exp(-\mathsf x)\exp(\mathsf x+\mathsf y)=\exp\big(\text{dexp}_{-\mathsf x}(\mathsf y)+O(||\mathsf y||^2)\big).
\end{equation}
Finally, note that the right-trivialized tangent of $\exp(\cdot)$ and its inverse are determined respectively by \cite{haier, iserles}
\begin{equation}
\label{dexp}
\text{dexp}_\eta(\zeta)=\sum_{j=0}^\infty\frac{1}{(j+1)!}\text{ad}^j_\eta\zeta,\quad\text{dexp}^{-1}_\eta(\zeta)=\sum_{j=0}^\infty\frac{B_j}{j!}\text{ad}^j_\eta\zeta,
\end{equation}
with $B_j$ being the {\it Bernoulli numbers} (i.e., $B_0=1$, $B_1=-\frac{1}{2}$, $B_2=\frac{1}{6}$, etc).

\section{First-order linearization of equation \eqref{etaexpression} - alternative proof} \label{order1eta}
Let us restate equation \eqref{etaexpression} below for convenience.
\begin{equation}
\label{etaexpression1}
\eta^{k+1}=\underbrace{\log\big(\exp(-\Delta t\bar{\xi}^k)\exp(\eta^k)\exp(\Delta t\bar{\xi}^k+\Delta t\delta\xi^k)\big)}_{=:RHS(\eta^k,\delta\xi^k)}.
\end{equation}
We will show that a first-order expansion can be obtained by using Taylor's formula directly. As expected, the result will match the linear terms of \eqref{etanew}.

Since $\eta^{k+1},\eta^k,\delta\xi^k\in\mathfrak{g}$ are all tangent vectors, the right-hand side of \eqref{etaexpression1} can be viewed as a mapping between vector spaces. Hence, we can expand about $\eta^k\equiv0$ and $\delta\xi^k\equiv0$ as follows
\begin{equation*}
\begin{split}
\eta^{k+1}=&RHS(0,0)+\mathsf D_\eta|_{(0,0)}RHS(\eta^k,\delta\xi^k)(\eta^k)+\\
&\mathsf D_{\delta\xi}|_{(0,0)}RHS(\eta^k,\delta\xi^k)(\delta\xi^k)+O(||(\eta^k,\delta\xi^k)||^2),
\end{split}
\end{equation*}
where
\begin{flalign*}
&RHS(0,0)=\log\big(\exp(-\Delta t\bar{\xi}^k)\exp(\Delta t\bar{\xi}^k)\big)=0,&
\end{flalign*}
\begin{flalign*}
&\mathsf D_\eta|_{(0,0)}RHS(\eta^k,\delta\xi^k)(\eta^k)=\\
&\mathsf D\log(e)\cdot\big(\exp(-\Delta t\bar{\xi}^k)(\mathsf D\exp(0)\cdot\eta^k)\exp(\Delta t\bar{\xi}^k)\big)=\\
&\text{Ad}_{\exp(-\Delta t\bar{\xi}^k)}(\eta^k),&
\end{flalign*}
\begin{flalign*}
&\mathsf D_{\delta\xi}|_{(0,0)}RHS(\eta^k,\delta\xi^k)(\delta\xi^k)=\\
&\mathsf D\log(e)\cdot\big(\exp(-\Delta t\bar{\xi}^k)\mathsf D\exp(\Delta t\bar{\xi}^k)\cdot(\Delta t\delta\xi^k)\big)=\\
&\Delta t\exp(-\Delta t\bar{\xi}^k)\text{dexp}_{(\Delta t\bar{\xi}^k)}(\delta\xi^k)\exp(\Delta t\bar{\xi}^k)=\\
&\Delta t\Ad_{\exp(-\Delta t\bar{\xi}^k)}\dexp_{(\Delta t\bar{\xi}^k)}(\delta\xi^k)=\\
&\Delta t\text{dexp}_{(-\Delta t\bar{\xi}^k)}(\delta\xi^k).&
\end{flalign*}
For the above expressions, we have used the chain rule and the following identities: $\exp(0)=e$, $\log(e)=0$, $\exp(-\rho)\exp(\rho)=e$, $\mathsf D\exp(0)\cdot\rho=\rho$, $\mathsf D\log(e)\cdot\rho=\rho$, and $\text{Ad}_{\exp(-\rho)}\text{dexp}_\rho(\lambda)=\text{dexp}_{(-\rho)}(\lambda)$.

A second-order expansion is also possible with this approach. For example, one could make use of higher-order covariant derivatives on mappings between manifolds. This in turn is associated with the selected affine connection (see discussion in \cite{lieprojection}). Due to the structure of \eqref{etaexpression1}, we prefer utilizing the BCH formula towards obtaining the quadratic terms. This yields a connection-independent scheme that relies on simple group operations.

\section{Matrix/vector form of linearized state perturbations} \label{app:matrixform}

Our goal is to obtain a matrix/vector form for equation \eqref{linzeta}. In what follows, $\{E_i\}_1^n$ denotes a basis for the Lie algebra $\mathfrak{g}'\ni\eta^k,\xi^k$. Let us also define the linear isomorphism $\mathsf{v}:\mathfrak{g}'\rightarrow\mathbb{R}^{n}$, such that $\vc{\mathsf x}=\vc{\sum_{i=1}^n{E}_i\mathsf x_i}:=(\mathsf x_1,...,\mathsf x_n)^\top$.

First, from the bilinearity of the adjoint representation, one has $\text{ad}_{\mathsf x}{\mathsf y}=\text{ad}_{(\sum_i\mathsf x_iE_i)}(\sum_j\mathsf y_jE_j)=\sum_i\sum_j\mathsf x_i\mathsf y_j\text{ad}_{E_i}E_j$. Thus, \eqref{etanew} can be rewritten as
\begin{equation}
\label{etamatrix}
\begin{split}
{\eta^{k+1}}\approx&\sum_i({\text{Ad}_{\exp(-\Delta t\bar{\xi}^k)}}E_i){\eta^k_i}+\sum_i(\Delta t{\text{dexp}_{(-\Delta t\bar{\xi}^k)}}E_i){\delta\xi^k_i}+\\
&\sum_{i,j}\big(\varOmega_{ij}\eta^k_i\eta^k_j+K_{ij}\eta^k_i\delta\xi^k_j\big),
\end{split}
\end{equation}
where we have defined:
\begin{equation}
\label{etamatrixterms}
\begin{split}
\varOmega_{ij}:=&\frac{\Delta t}{12}\big({\text{dexp}_{(-\Delta t\bar{\xi}^k)}}{\text{ad}_{E_i}}{\ad_{E_j}}{\bar{\xi}^k}+\\
&\frac{\Delta t}{2}{\text{ad}_{E_i}}{\text{ad}^2_{\bar{\xi}^k}}{E_j}+\frac{\Delta t}{2}{\ad_{E_i}}{\ad_{E_j}}{\bar{\xi}^k}-\\
&{\ad_{\dexp^{-1}_{(\Delta t\bar{\xi}^k)}{E_i}}}{\ad_{\dexp^{-1}_{(\Delta t\bar{\xi}^k)}{E_j}}}{\bar{\xi}^k}\big),\\
K_{ij}:=&\frac{\Delta t}{2}\big({\text{dexp}_{(-\Delta t\bar{\xi}^k)}}{\text{ad}_{E_i}}{E_j}+\\
&\frac{\Delta t}{6}\big({\ad_{\bar{\xi}^k}}{\ad_{E_j}}{E_i}+2{\ad_{E_j}}{\ad_{\bar{\xi}^k}}{E_i}+\\
&{\ad_{E_i}}{\ad_{\bar{\xi}^k}}{E_j}\big)\big).
\end{split}
\end{equation}
From eqs. \eqref{expandxi} and \eqref{etamatrix}, one can compute matrices $\varPhi^k\in\mathbb{R}^{2{n}\times2{n}}$, $B^k\in\mathbb{R}^{2n\times m}$, $\varTheta^k_{(i)}\in\mathbb{R}^{2{n}\times2{n}}$, $\varGamma^k_{(i)}\in\mathbb{R}^{2{n}\times m}$, $\varDelta^k_{(i)}\in\mathbb{R}^{m\times2{n}}$, and $\varXi^k_{(i)}\in\mathbb{R}^{m\times m}$, for $i=1,...,2{n}$, such that

\begin{equation}
\label{linzetamatrix}
\begin{split}
\vc{\zeta^{k+1}&}\approx\varPhi^k\vc{\zeta^k}+B^k\delta u^k+\\
\frac{1}{2}&\left(\begin{bmatrix}(\vc{\zeta^k})^\top\varTheta_{(1)}^k\vc{\zeta^k}\\\vdots\\(\vc{\zeta^k})^\top\varTheta_{(2{n})}^k\vc{\zeta^k}\end{bmatrix}\right.+\begin{bmatrix}(\vc{\zeta^k})^\top\varGamma^k_{(1)}\delta u^k\\\vdots\\(\vc{\zeta^k})^\top\varGamma^k_{(2{n})}\delta u^k\end{bmatrix}+\\
&\hspace{3mm}\begin{bmatrix}(\delta u^k)^\top\varDelta^k_{(1)}\vc{\zeta^k}\\\vdots\\(\delta u^k)^\top\varDelta^k_{(2{n})}\vc{\zeta^k}\end{bmatrix}+\left.\begin{bmatrix}(\delta u^k)^\top\varXi_{(1)}^k\delta u^k\\\vdots\\(\delta u^k)^\top\varXi_{(2{n})}^k\delta u^k\end{bmatrix}\right),
\end{split}
\end{equation}
with $\vc{\zeta^k}=((\vc{\eta^k})^\top,(\vc{\delta\xi^k})^\top)^\top$. Specifically, let us partition the transition matrices of \eqref{linzetamatrix} as follows
\begin{equation*}
\begin{split}
&\varPhi^k:=\begin{bmatrix}\varPhi^k_{\chi\chi}&\varPhi^k_{\chi\xi}\\\varPhi^k_{\xi\chi}&\varPhi^k_{\xi\xi}\end{bmatrix},\quad B^k:=\begin{bmatrix}B^k_{\chi}\\B^k_{\xi}\end{bmatrix},\\
&\varTheta_{(i)}^k:=\begin{bmatrix}\varTheta^k_{\chi\chi(i)}&\varTheta^k_{\chi\xi(i)}\\(\varTheta^k_{\chi\xi(i)})^\top&\varTheta^k_{\xi\xi(i)}\end{bmatrix},\quad\varGamma_{(i)}^k:=\begin{bmatrix}\varGamma^k_{\chi(i)}\\\varGamma^k_{\xi(i)}\end{bmatrix},
\end{split}
\end{equation*}
Then, it is easy to see that
\begin{equation*}
\begin{split}
&(\varPhi^k_{\chi\chi})_{ij}:=\vc{\text{Ad}_{\exp(-\Delta t\bar{\xi}^k)}E_j}_i,\hspace{1.3mm}(\varPhi^k_{\chi\xi})_{ij}:=\Delta t\vc{\text{dexp}_{(-\Delta t\bar{\xi}^k)}E_j}_i,\\
&(\varPhi^k_{\xi\chi})_{ij}:=\mathsf D_\chi(f_\xi^k)_i(\bar{g}^k,\bar{u}^k)(\bar{\chi}^kE_j),\\
&(\varPhi^k_{\xi\xi})_{ij}:=\mathsf D_\xi(f_\xi^k)_i(\bar{g}^k,\bar{u}^k)(E_j),\\
&B^k_{\chi}:={0},\quad B^k_{\xi}:=\mathsf D_u(f_\xi^k)(\bar{g}^k,\bar{u}^k),
\end{split}
\end{equation*}
where $(\#)_{ij}$ denotes here the $ij^\text{th}$ element of a matrix $``\#"$. Furthermore,
\begin{itemize}\item for each $1\leq i\leq n$:\end{itemize}
\begin{equation*}
\begin{split}
&(\varTheta^k_{\chi\chi(i)})_{jl}:=\begin{cases} 2\vc{\Omega_{jl}}_i, & j=l \\ \vc{\Omega_{jl}}_i+\vc{\Omega_{lj}}_i, & j\neq l \end{cases},\\
&(\varTheta^k_{\chi\xi(i)})_{jl}:=\vc{K_{jl}}_i,\quad(\varTheta^k_{\xi\xi(i)})_{jl}:=0,
\end{split}
\end{equation*}
\begin{equation*}
\varGamma^k_{(i)}:={0},\quad\varDelta^k_{(i)}:={0},\quad\varXi^k_{(i)}:={0},
\end{equation*}
\begin{itemize}\item while for each $i> n$:\end{itemize}
\begin{equation*}
\begin{split}
&(\varTheta^k_{\chi\chi(i)})_{jl}:=\text{Hess}^{(0)}_\chi(f_\xi^k)_i(\bar{g}^k,\bar{u}^k)(\bar{\chi}^kE_j)(\bar{\chi}^kE_l),\\
&(\varTheta^k_{\chi\xi(i)})_{jl}:=\mathsf D_\xi D_\chi(f_\xi^k)_i(\bar{g}^k,\bar{u}^k)(E_l)(\bar{\chi}^kE_j),\\
&(\varTheta^k_{\xi\xi(i)})_{jl}:=\mathsf D^2_\xi(f_\xi^k)_i(\bar{g}^k,\bar{u}^k)(E_j)(E_l),
\end{split}
\end{equation*}
\begin{equation*}
\begin{split}
&(\varGamma^k_{\chi(i)})_{jl}:=\mathsf D_{u_l} D_\chi(f_\xi^k)_i(\bar{g}^k,\bar{u}^k)(\bar{\chi}^kE_j),
\end{split}
\end{equation*}
\begin{equation*}
\varDelta^k_{(i)}:=(\varGamma^k_{(i)})^\top,\quad\varXi^k_{(i)}:=\mathsf D^2_u(f_\xi^k)_i(\bar{g}^k,\bar{u}^k).
\end{equation*}

\section{Proof of Lemma \ref{lemm:psi}}\label{appproof1}
Let  $\{\bar{u}^i+\upsilon^i\}_0^{H-1}$ denote small enough control perturbations about $\bar{U}$. Define also the concatenated vector $\mathsf U:=((\upsilon^0)^\top,...,(\upsilon^{H-1})^\top)^\top\in\mathbb{R}^{m(H-1)}$. Then, it is not hard to see that
 \begin{equation}
 \label{chainrule}
 \begin{split}
\mathsf D_U|_{\bar{U}}J(\{g^i\},\{u^i\})&=\mathsf D_{\mathsf U}|_{ 0}J(\{{g}^i_\epsilon\},\{{u}^i_\epsilon\})\\
&=\mathsf D_{\mathsf U}|_{ 0}J(\{\bar{g}^i\exp(\zeta^i)\},\{\bar{u}^i+\upsilon^i\}).
\end{split}
\end{equation}
$\{g^i_\epsilon\}$ denotes here the state trajectory under the perturbed controls $\{u_\epsilon^i\}=\{\bar{u}^i+\upsilon^i\}$ (i.e., $g_\epsilon^{i+1}=f^i(g^i_\epsilon,u^i_\epsilon)$), which can be written using exponential coordinates. Notice that the last term in \eqref{chainrule} is only a function of $\{\zeta^i\}$, $\{\upsilon^i\}$. Furthermore, $\{\upsilon^i\}_0^{H-1}\equiv\{0\}_0^{H-1}$ implies $\{\zeta^i\}_0^H\equiv\{0\}_0^H$. Now using the chain rule for mappings on manifolds yields
\begin{equation*}
\begin{split}
&\dD_{u^k}|_{\bar{U}} J\overset{\eqref{chainrule}}{=}\dD_{\upsilon^k}|_{ 0}J(\{\bar{g}^i\exp(\zeta^i)\},\{\bar{u}^i+\upsilon^i\})\\
&\overset{\eqref{J}}{=}\dD_{\upsilon^k}|_{ 0}\big[\sum_{i=k}^{H-1}\Lambda^i(\bar{g}^i\exp(\zeta^i),\bar{u}^i+\upsilon^i))+F(\bar{g}^H\exp(\zeta^H))\big]\\
&=\dD_{u^k}\Lambda^k(\bar{g}^k,\bar{u}^k)\\
&\quad+\dD_{g^{k+1}}\Lambda^{k+1}(\bar{g}^{k+1},\bar{u}^{k+1})\circ T_eL_{\bar{g}^{k+1}}\circ\dD\exp(0)\circ\left.\frac{\partial\zeta^{k+1}}{\partial\upsilon^k}\right|_{ 0}\\
&\quad+\cdots+\dD_{g^H} F(\bar{g}^H)\circ T_eL_{\bar{g}^H}\circ\dD\exp(0)\circ\\
&\hspace{18mm}\frac{\partial\zeta^{H}}{\partial\zeta^{H-1}}\circ\frac{\partial\zeta^{H-1}}{\partial\zeta^{H-2}}\circ\cdots\circ\left.\frac{\partial\zeta^{k+1}}{\partial\upsilon^k}\right|_{ 0}.
\end{split}
\end{equation*}
Finally, from the identity $\dD\exp(0)\cdot\rho=\rho$ and equations \eqref{leftell}, \eqref{linzeta1}, the above expression becomes
\begin{equation*}
\begin{split}
\dD_{u^k}|_{\bar{U}} J=&\ell_u^k+\dD_{g^{k+1}}\Lambda^{k+1}\circ T_eL_{\bar{g}^{k+1}}\circ\text{B}^k+\\
&\dD_{g^{k+2}}\Lambda^{k+2}\circ T_eL_{\bar{g}^{k+2}}\circ\Phi^{k+1}\circ\text{B}^k+\cdots\\
&+\dD_{g^H} F\circ T_eL_{\bar{g}^H}\circ\Phi^{H-1}\circ\Phi^{H-2}\circ\cdots\Phi^{k+1}\circ\text{B}^k\\
\overset{\eqref{psi}}{=}&\ell_u^k+(\text B^k)^{*}\circ\bigg[\ell_g^{k+1}+(\Phi^{k+1})^*\circ\big[\ell_g^{k+2}+(\Phi^{k+2})^*\circ\big[\cdots\\
&+(\Phi^{H-2})^*\big[\underbrace{\ell_g^{H-1}+(\Phi^{H-1})^*\circ \underbrace{T_eL_{\bar{g}^H}^*\circ\dD_{g^H} F}_{\psi^H}}_{\psi^{H-1}}\big]\cdots\big]\bigg]\\
\overset{\eqref{psi}}{=}&\cdots\\
\overset{\eqref{psi}}{=}&\ell_u^k+(\text B^k)^{*}\circ\psi^{k+1},
\end{split}
\end{equation*}
where we have omitted showing the explicit dependence on $\{\bar{g}^i\}$ and $\{\bar{u}^i\}$ for brevity.

\section{Proof of Corollary \ref{thconv2}}\label{appproof2}
Let $J_{(i)}$, $U_{(i)}$ denote respectively the total cost and control sequence  at iteration $i$. Define also $\Delta J_{(i)}:= J_{(i)}- J_{(i-1)}$. From Assumption \ref{ass1} and Theorem \ref{th1}, there exists $\gamma\in(0,1]$ small enough, such that $\Delta J_{(i)}<0$, for all $i$. Therefore, $J$ is monotonically decreasing and, thus\footnote{$J$ is assumed to be differentiable and, hence, continuous.}, there exists $U_*\in\mathcal{U}$ such that: $\lim_{i\rightarrow\infty}\Delta J_{(i)}=0$, with $\lim_{i\rightarrow\infty} J(U_{(i)})=J(U_*)$.

Now, by combining $\Delta J\rightarrow0$ and Theorem \ref{th1}, one has: $\langle Q_u^k, (Q_{uu}^k)^{-1}(Q_u^k)\rangle\rightarrow0\xRightarrow{Q_{uu}^k \text{ is p.d.}}Q_u^k\rightarrow 0$, for each $k$. Recalling that $\zeta^0=0$, we obtain from the control update \eqref{dustar}: $\delta u^0_\star=-(Q_{uu}^0)^{-1}(Q_u^0)=0$. From eq. \eqref{linzeta}, this in turn yields $\zeta^1=0$. Next, assume that $\delta u^k_\star=0$ and $\zeta^{k+1}=0$, for some $k>0$. Then, $\delta u^{k+1}_\star=-(Q_{uu}^{k+1})^{-1}(Q_u^{k+1})-(Q_{uu}^{k+1})^{-1}\circ Q_{ug}^{k+1}(\zeta^{k+1})=0$. By induction, this proves that $\delta u^k_\star=0$ for all $k$, or equivalently that there exists $U_*\in\mathcal{U}$ such that $U\rightarrow U_*$.

It remains to show that $U_*$ is stationary. Since $Q_u^k=0$ for each $k$, \eqref{backv} implies that:
\begin{equation}
\label{ole}
\mathcal{V}_g^k=Q_g^k-Q_{gu}^k\circ(Q_{uu}^k)^{-1}\circ Q_u^k=Q_g^k,\quad\forall k.
\end{equation}
Based on this, we get from \eqref{psi}
\begin{equation*}
\begin{split}
\psi^{H-1} &= \ell_g^{H-1}+(\Phi^{H-1})^*\circ \mathcal{V}_g^{H}\overset{\eqref{Qfunctions}}{=}Q_g^{H-1}\overset{\eqref{ole}}{=}\mathcal{V}_g^{H-1},\\
\psi^{H-2} &= \ell_g^{H-2}+(\Phi^{H-1})^*\circ\psi^{H-1}=\ell_g^{H-2}+(\Phi^{H-1})^*\circ\mathcal{V}_g^{H-1}\\
&\overset{\eqref{Qfunctions}}{=}Q_g^{H-2}\overset{\eqref{ole}}{=}\mathcal{V}_g^{H-2},\\
&\vdots\\
\psi^k&=\mathcal{V}_g^k,\quad\forall k.
\end{split}
\end{equation*}
Hence, \eqref{eq_lemma1} reads: $\dD_{u^k}|_{\bar{U}} J=\ell_u^k+(\text B^k)^{*}\circ\mathcal{V}_g^{k+1}\overset{\eqref{Qfunctions}}{=}Q_u^k=0$.

\bibliographystyle{IEEEtran}
\bibliography{IEEEabrv,mybib_lie}

\end{document}